\newcommand\norm[1]{\left\|#1\right\|}
\newcommand\abs[1]{\lvert#1\rvert}
\newcommand{\tforall}{\text{ for all }}
\newcommand{\ms}{\text{CEM}}
\newcommand{\tand}{\quad \text{and} \quad}
\theoremstyle{definition}
\newtheorem{theorem}{Theorem}[section]
\newtheorem{proposition}[theorem]{Proposition}
\newtheorem{corollary}[theorem]{Corollary}
\newtheorem{lemma}[theorem]{Lemma}
\theoremstyle{remark}
\newtheorem*{remark}{Remark}
\DeclareMathOperator{\spa}{span}
\title{Computational multiscale methods for parabolic wave approximations in heterogeneous media}
\author{
Eric Chung\thanks{Department of Mathematics, The Chinese University of Hong Kong, Shatin, Hong Kong}\ , \quad Yalchin Efendiev\thanks{Department of Mathematics and Institute of Scientific Computing, Texas A\&M University, College Station, TX 77843, USA}\ , \quad Sai-Mang Pun\thanks{Department of Mathematics, Texas A\&M University, College Station, TX 77843, USA}\ , \quad and \quad Zecheng Zhang\thanks{Department of Mathematics, Purdue University, West Lafayette, IN 47906, USA}}
\begin{document}
\maketitle
\begin{abstract}
In this paper, we develop a computational multiscale to solve the parabolic wave approximation with heterogeneous and variable media. 
Parabolic wave approximation is a technique to approximate the full wave equation. One benefit of the method is that: one wave propagation direction can be taken as an evolution direction, and we then can discretize it using a classical scheme like Backward Euler. 
Consequently, we obtain a set of quasi-gas-dynamic (QGD) models with different heterogeneous permeability fields. 
Then, we employ constraint energy minimization generalized multiscale finite element method (CEM-GMsFEM) to perform spatial discretization for the problem. 
The resulting system can be solved by combining the central difference in time evolution. Due to the variable media, we apply the technique of proper orthogonal decomposition (POD) to further the dimension of the problem and solve the corresponding model problem in the POD space instead of in the whole multiscale space spanned by all possible multiscale basis functions. We prove the stability of the full discretization scheme and give the convergence analysis of the proposed approximation scheme. Numerical results verify the effectiveness of the proposed method.
\end{abstract}


\section{Introduction}
Parabolic wave approximation have been used to approximate wave equations with a preferred direction \cite{bamberger1988higher,bamberger1988parabolic}.  
To be more specific, we study the approximation of the full wave equation:
\begin{eqnarray}
\rho \partial_{tt} u - \nabla \cdot (\mu \nabla u) = 0.
\label{eqn:wave}
\end{eqnarray}
Here, $\nabla$ denotes the gradient operator in $\mathbb{R}^3$; $\rho$ and $\mu$ are positive functions in $\mathbb{R}^3$. We then consider the parabolic approximation of \eqref{eqn:wave} in $D \times \Omega \subseteq \mathbb{R}^3$: for $(z,x) \in D \times \Omega$ with the boundary $\Gamma := \partial (D \times \Omega)$, find $v = \sigma^{1/2} u$ such that 
\begin{eqnarray}
\begin{split}
c^{-1} \partial_{tt} v  + \partial_t (\partial_{z} v) - \frac{1}{2} \nabla_x \cdot \left (c \nabla_x v \right ) & = 0 & \quad \text{in } D \times \Omega \times (0,T], \\
v(z,x,0) & = v_0(z,x) & \quad \text{in } D \times \Omega, \\
\partial_t v(z,x,0) & = v_1(z,x) & \quad \text{in } D \times \Omega, \\
v(z,x,t) & = g(z,x,t) & \quad \text{in } \Gamma \times (0,T].
\end{split}
\label{eqn:model}
\end{eqnarray}
Here, $\nabla_x$ denotes the gradient operator defined in the bounded domain $\Omega \subset \mathbb{R}^2$; $D = [d_1,d_2] \subset \mathbb{R}$ is a bounded domain; $v_0$ and $v_1$ are initial conditions; $g$ is boundary condition and $T>0$ is a given terminal time.

Wave equations of this type have a propagation direction $z$ which plays a role of time \cite{bamberger1984paraxial}.
Many real-world problems can be solved by the parabolic approximation; in particular in the areas of geology \cite{claerbout1976fundamentals}, under water acoustics \cite{blomgren2002super,brock1977modifying, mcdaniel1975parabolic,mcdaniel1975propagation} and optics  \cite{cole1975modern,hasegawa1973transmission,hudson1980parabolic}. There are two benefits for using the parabolic approximation instead of the full wave equation \cite{bamberger1988higher,bamberger1988parabolic}: (i) it is easier to be realized and computationally more efficient; and (ii) it makes it possible to use the approximation as an evolution equation in $z$ direction. 
The second property makes it possible to solve the problem as an equation evolving in $z$ direction; we hence can discretize ($\partial_z v$) by applying classical difference scheme. Consequently, we obtain a quasi-gas-dynamic (QGD) equation which have been thoroughly studied in literature \cite{chetverushkin2020computational,chetverushkin2019compact, chetverushkin2018kinetic}.

It is relatively common in the geological problems \cite{claerbout1976fundamentals} that the medium under consideration is highly variable and heterogeneous; that is, the function field $c$ is non-homogeneous given a cross section in $z$ and is fast-changing in $z$ direction. 
This brings in two difficulties. The first to mention is the multiscale property brought by the heterogeneous field. 
The second difficulty is the intense work in solving a QGD model given a $z$ cross-section in the $z$ evolution.
Throughout the work, we study \eqref{eqn:model} with variable and heterogeneous media.
We will derive scheme in discretizing the time and $z$ evolution and also provide our approach to solve the problems mentioned before. 

As we have discussed, the model has heterogeneous in $z$ direction. Directly solving this problem on fine mesh can capture the multiscale features; however, this is computationally intense and this issue becomes exacerbate when people are solving time-dependent problems. Therefore, many methods which solve the multiscale problems on coarser mesh have been proposed.
These include
 homogenization-based approaches \cite{cances2015embedded, chen2019homogenize,chen2019homogenization,fu2019edge,le2014msfem,le2014multiscale,salama2017flow}, 
multiscale
finite element methods \cite{hkj12,hw97,jennylt03,jennylt05}, 
generalized multiscale finite element methods (GMsFEM) \cite{chung2016adaptiveJCP,MixedGMsFEM,WaveGMsFEM,chung2018fast,chung2015goal,GMsFEM13,gao2015generalized}, constraint energy minimizing GMsFEM (CEM-GMsFEM) \cite{chung2018constraint, chung2018constraintmixed}, nonlocal
multi-continua (NLMC) approaches \cite{NLMC},
metric-based upscaling \cite{oz06_1}, heterogeneous multiscale method \cite{abe07,ee03}, localized orthogonal decomposition (LOD) \cite{henning2012localized,maalqvist2014localization}, equation free approaches \cite{rk07,skr06,srk05}, computational continua \cite{fafalis2018computational,fish2010computational,fish2005multiscale}, hierarchical multiscale method \cite{brown2013efficient,hs05,tan2019high}, 
and so on. Some of these approaches, such as homogenization-based
approaches, are designed for problems with scale 
separation. In this work, we apply the CEM-GMsFEM \cite{chung2018constraint, chung2018constraintmixed} and provide the convergence analysis of our proposed scheme based on the coarse mesh convergence results of the CEM-GMsFEM.

The second difficulty of the problem is the variable media. If we discretize $z$ evolution using some classical difference scheme, each $z$ level is a QGD model with heterogeneous. This model can be solved in the framework of CEM-GMsFEM \cite{chetverushkin2020computational}; however the coarse scale basis evaluation is time consuming; in particular, this process will be repeated for each level of $z$. We hence proposed the proper orthogonal decomposition (POD) technique \cite{kunisch2001galerkin} which is target to find low dimensional subspace such that the error of the orthogonal projection is minimized in the sense of the norm induced by the inner product of the original space. To be more specific, the proposed method can be summarized as follows:

We proposed a method to solve a parabolic-wave model with variable and heterogeneous media. We first apply the backward Euler scheme to discretization the term $\partial_z v$. 
This leads to a set of two dimensional (space) QGD models \eqref{eqn:quasi-dis}; we call this level of discretization the quasi-time scheme and an unconditional stability result is established. 
To further discretize the problem, we apply the central scheme to deal with the time derivative $v_{tt}$, $v_t$; 
and then use CEM-GMsFEM in space on coarse scale to capture the heterogeneous brought by the media. 
We then prove that the full discretization scheme is stable in an energy norm under some CFL condition by using an inverse inequality in the multiscale space.

The key of the CEM-GMsFEM method is to construct the CEM basis.
The standard procedure is first to build the auxiliary multiscale basis by solving local spectral problems in coarse mesh; we then can construct the CEM basis by evaluating a set of energy minimization problems. Due to the variable media, we need to construct a set of basis for each QGD model and the solution of the full discretized scheme is in the space of all multiscale basis. This is time consuming; and we hence can apply the POD technique to find the best set of orthogonal basis in the sense of $L_2$ minimization; that is, the projection error of the full discretized solution onto the POD basis is optimal in the norm induced by the original space. In practice, we collect CEM basis for some QGD models and then construct POD basis of the space spanned by all CEM basis; the POD models will finally be solved by using POD basis. A convergence analysis of the POD approximation is established and the numerical results prove the algorithm is successful.

The remainder of the paper is organized as follows. In Section \ref{model_problem}, we present some preliminaries of the model problem and briefly overview the framework of proper orthogonal decomposition. 
Section \ref{sec:gmsfem} is devoted to the multiscale methods and we will briefly overview the construction of multiscale basis function within the framework of the CEM-GMsFEM. In Section \ref{sec:analysis}, we present a complete analysis of the proposed computational multiscale method. We then present
some numerical results to demonstrate the efficiency of the proposed method in Section \ref{sec:numerics}. Concluding remarks are drawn in Section \ref{sec:conclusion}.

\section{Preliminaries}\label{model_problem}
In this section, we present some preliminaries of the model problem. For simplicity, we assume the homogeneous boundary condition $g = 0$ is equipped in the model problem \eqref{eqn:model}. The extension of inhomogeneous case is straightforward. The system \eqref{eqn:model} is the one we shall consider throughout the remainder of this paper and we shall develop computational multiscale method for efficiently simulating the problem. 
We remark that under appropriate regularity assumptions on initial and boundary conditions, the problem \eqref{eqn:model} has a unique solution such that 
$$ t \to v(z,x,t) \in W^{1,\infty} (0,T; L^2(D \times \Omega)) \cap L^\infty (0,T; L^2(D \times \Omega)),$$
$$z \to v(z,x,t) \in L^\infty (D, H^1(0,T; L^2(\Omega))).$$
The last property enables us to consider $z$ as an evolution direction. 
The result is an application of the semigroup theory and the Hille-Yoshida theorem. See \cite[Section 4]{bamberger1988parabolic} for more details. 

Instead of the PDE formulation \eqref{eqn:model}, we consider its corresponding variational formulation. 
In the following, we treat $z$ as an evolution direction and use backward Euler method to discretize the term $\partial_{z} v$. We divide the domain $D = [d_1, d_2]$ along $z$-direction into $K$ pieces. We write, for $k = 0,1,\cdots, K$, 
$$v_k = v(z_k) \quad \text{with} \quad z_{k} = d_1 + k\Delta z \tand d_2 = d_1 + K\Delta z.$$ 
The quasi-time discretization of \eqref{eqn:model} reads: find $\theta_k \in V$ for $k = 1, \cdots, K-1$ such that 
\begin{eqnarray}
 \left ( \ddot{\theta}_k, w; z_k \right )_c + \frac{1}{\Delta z} \left ( \dot{\theta}_k - \dot{\theta}_{k-1}, w \right ) + \frac{1}{2} a(\theta_k,w; z_k) = 0 \quad \tforall w \in V:= H_0^1(\Omega)
\label{eqn:quasi-dis}
\end{eqnarray}
or equivalently 
$$\Delta z  \left ( \dot{\theta}_k, w; z_k \right )_c + \left ( \dot{\theta}_k , w \right ) + \frac{\Delta z }{2} a(\theta_k,w; z_k) = \left ( \dot{\theta}_{k-1}, w \right ) \quad \tforall w \in V, 
$$
where $(v,w; z_k)_c := \int_\Omega c^{-1}(z_k) v w ~ dx$, $(v,w) := \int_\Omega vw ~ dx$, and $a(v,w; z_k) := \int_\Omega c(z_k) \nabla v \cdot \nabla w~ dx$. Here, we denote $\dot{v} := \partial_t v$ and $\ddot{v} := \partial_{tt} v$. 
We assume that $c \in L^\infty(D \times \Omega)$ and we denote $c_{\max} := \norm{c}_{L^\infty(D \times \Omega)}$. 
Employing Galerkin's method and the method of energy estimate, one can show the well-posedness of the variational formulation \eqref{eqn:quasi-dis}. 
Denote $\norm{v} := \sqrt{(v,v)}$, $\norm{v}_{c(z_k)} := \sqrt{(v,v; z_k)_c}$, and $\norm{v}_{a(z_k)}:= \sqrt{a(v,v; z_k)}$. 
We establish the following stability estimate for the quasi-time discretization \eqref{eqn:quasi-dis}. 
\begin{lemma}
Let $\{v_k\}_{k=0}^K \subseteq V$ solve the equation \eqref{eqn:quasi-dis}. Then, the following stability estimate holds: 
$$\norm{\dot{v_K}}_{L^2(0,T; L^2(\Omega))}^2  +  \Delta z \sum_{k=1}^K \mathcal{E}_k (v_k(T)) \lesssim \norm{\dot{v_0}}_{L^2(0,T; L^2(\Omega))}^2  + \Delta z \sum_{k=1}^K  \mathcal{E}_k(v_k(0)),$$
where we denote $\mathcal{E}_k(v) := \norm{\dot{v}}_{c(z_k)}^2 + \frac{1}{2} \norm{v}_{a(z_k)}^2$.
\label{semi_stability}
\end{lemma}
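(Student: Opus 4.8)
The plan is to run the standard discrete energy argument for second-order evolution equations. Fixing $k \in \{1,\dots,K-1\}$, I would test the variational identity \eqref{eqn:quasi-dis} with $w = \dot{v}_k$, the natural choice for wave-type estimates. The idea is that this choice converts the inertial term and the stiffness term into exact $t$-derivatives of the two quantities that make up $\mathcal{E}_k$, while the $z$-coupling term is handled by a telescoping sum over $k$.

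Concretely, because the weights $c^{-1}(z_k)$ and $c(z_k)$ depend only on $(z,x)$ and carry no $t$-dependence, they commute with $\frac{d}{dt}$. Thus the inertial term becomes $(\ddot{v}_k, \dot{v}_k; z_k)_c = \tfrac{1}{2}\frac{d}{dt}\norm{\dot{v}_k}_{c(z_k)}^2$, and the stiffness term becomes $\tfrac{1}{2}a(v_k,\dot{v}_k;z_k) = \tfrac{1}{4}\frac{d}{dt}\norm{v_k}_{a(z_k)}^2$. Adding these two local-in-$z$ contributions produces exactly $\tfrac{1}{2}\frac{d}{dt}\mathcal{E}_k(v_k)$, so the energy functional $\mathcal{E}_k$ emerges of its own accord.

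The crux is the coupling term $\frac{1}{\Delta z}(\dot{v}_k - \dot{v}_{k-1}, \dot{v}_k)$, which ties together consecutive $z$-levels. I would rewrite it with the elementary identity $2(a-b,a) = \norm{a}^2 - \norm{b}^2 + \norm{a-b}^2$, taking $a = \dot{v}_k$ and $b = \dot{v}_{k-1}$, which gives $\frac{1}{2\Delta z}\left(\norm{\dot{v}_k}^2 - \norm{\dot{v}_{k-1}}^2 + \norm{\dot{v}_k - \dot{v}_{k-1}}^2\right)$. The last summand is nonnegative and can be discarded; it is precisely the numerical dissipation responsible for the unconditional stability of the quasi-time scheme. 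Multiplying the whole identity by $2\Delta z$ and summing over $k = 1,\dots,K$, the differences $\norm{\dot{v}_k}^2 - \norm{\dot{v}_{k-1}}^2$ telescope to $\norm{\dot{v}_K}^2 - \norm{\dot{v}_0}^2$, leaving a differential inequality in $t$ of the form $\Delta z \sum_{k=1}^K \frac{d}{dt}\mathcal{E}_k(v_k) + \norm{\dot{v}_K}^2 - \norm{\dot{v}_0}^2 \le 0$.

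Finally I would integrate this inequality in $t$ over $(0,T)$: the $\frac{d}{dt}\mathcal{E}_k$ terms integrate to $\mathcal{E}_k(v_k(T)) - \mathcal{E}_k(v_k(0))$, and the spatial $L^2$ norms integrate to the $L^2(0,T;L^2(\Omega))$ norms appearing in the statement. Rearranging then yields the asserted bound, in fact with constant one (so the $\lesssim$ is comfortably satisfied). The only genuinely delicate step is the treatment of the $z$-coupling term and the recognition of its telescoping structure; everything else is routine manipulation, the key bookkeeping being that the weights are $t$-independent so that they pass through the time derivative.
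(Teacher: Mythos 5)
Your proposal is correct and follows essentially the same route as the paper: test \eqref{eqn:quasi-dis} with $w=\dot{v}_k$, recognize the exact time derivative of $\mathcal{E}_k$, telescope the coupling term over $k$, and integrate in $t$. The only (immaterial) difference is that you handle the coupling term via the identity $2(a-b,a)=\norm{a}^2-\norm{b}^2+\norm{a-b}^2$ and discard the nonnegative dissipation, whereas the paper uses Cauchy--Schwarz and Young's inequality on $(\dot{v}_k,\dot{v}_{k-1})$; both yield the same telescoping structure and constant.
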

\begin{remark}
The term $\mathcal{E}_k(v_k(0)) = \norm{v_1(z_k)}_{c(z_k)}^2 + \frac{1}{2} \norm{v_0(z_k)}_{a(z_k)}^2$ on the right-hand side of the inequality above is determined by the initial conditions. 
\end{remark}
\begin{proof}[Proof of Lemma \ref{semi_stability}]
Taking $w =  \dot{v_k}$, we have 
\begin{eqnarray*}
\begin{split}
& \quad \frac{\Delta z}{2} \frac{d}{dt} \left ( \norm{\dot{v}_k}_{c(z_k)}^2 + \frac{1}{2} \norm{v_k}_{a(z_k)}^2 \right ) + \norm{\dot{v}_k}^2 -(\dot{v}_{k-1}, \dot{v}_{k}) \\
& = \Delta z \left ( \ddot{v}_k, \dot{v}_k ;z_k \right )_{c}+ (\dot{v}_k - \dot{v}_{k-1}, \dot{v_k}) + \frac{\Delta z}{2} a(v_k, \dot{v_k};z_k)  = 0. 
\end{split}
\end{eqnarray*}
Thus, we have 
$$ \quad \frac{\Delta z }{2} \frac{d}{dt} \left (  \norm{\dot{v}_k}_{c(z_k)}^2 + \frac{1}{2} \norm{v_k}_{a(z_k)}^2 \right )  + \norm{\dot{v}_k}^2  =   (\dot{v}_k, \dot{v}_{k-1}) \leq \norm{\dot{v}_k} \norm{\dot{v}_{k-1}}.$$
Multiplying by $2$ and integrating over $(0,T]$, we have 
\begin{eqnarray*}
\begin{split}
& {\Delta z} \left ( \norm{\dot{v}_k(T)}_{c(z_k)}^2 + \frac{1}{2} \norm{v_k(T)}_{a(z_k)}^2 \right ) + 2 \int_0^T \norm{\dot{v}_k}^2 dt  \\
& \leq 2 \int_0^T \norm{\dot{v}_k} \norm{\dot{v}_{k-1}} dt + \Delta z \left ( \norm{\dot{v}_k (0)}_{c(z_k)}^2 + \frac{1}{2} \norm{v_k(0)}_{a(z_k)}^2 \right )\\
&\leq   \int_0^T \norm{\dot{v}_k}^2 dt + \int_0^T \norm{\dot{v}_{k-1}}^2 dt + \Delta z \left ( \norm{\dot{v}_k(0)}_{c(z_k)}^2 + \frac{1}{2} \norm{v_k(0)}_{a(z_k)}^2 \right ).\\
\end{split}
\end{eqnarray*}
Therefore, we have 
\begin{eqnarray*}
\begin{split}
\norm{\dot{v}_k}_{L^2(0,T; L^2(\Omega))}^2  & + {\Delta z} \left (  \norm{\dot{v}_k(T)}_{c(z_k)}^2 + \frac{1}{2} \norm{v_k(T)}_{a(z_k)}^2 \right ) \\
& \leq \norm{\dot{v}_{k-1}}_{L^2(0,T; L^2(\Omega))}^2  + \Delta z \left (\norm{\dot{v}_k(0)}_{c(z_k)}^2 + \frac{1}{2} \norm{v_k(0)}_{a(z_k)}^2 \right ).
\end{split}
\end{eqnarray*}
Summing over $k = 1, \cdots, K$, we obtain the desired result and this completes the proof. 
\end{proof}

\subsection{The Proper Orthogonal Decomposition}
\label{proper_orthogonal_decomposition}
In this section, we briefly introduce the proper orthogonal decomposition (POD) method. This method aims to generate optimally ordered orthogonal basis functions in the least squares sense for a given set of theoretical, experimental, or computational data. Reduced-order models or surrogate models are then obtained by truncating this set of optimal basis functions, providing considerable computational savings over the original high-dimensional problems. 

Let $X$ be a real Hilbert space endowed with inner product $(\cdot,\cdot)_X$ and norm $\norm{\cdot}_X$. We set $\mathcal{V} := \spa \{ y_1, y_2, \cdots, y_n \}$ with each $y_i \in X$ for $i \in \{ 1, \cdots, n\}$. We refer to $\mathcal{V}$ as ensemble consisting of the snapshots $\{ y_i \}_{i=1}^n$, at least one of which is assumed to be non-zero. Let $\{ \psi_k \}_{k=1}^{\mathcal{N}}$ denote a set of orthonormal basis functions of  $\mathcal{V}$ with $\mathcal{N} := \text{dim}(\mathcal{V}) \leq n$. Then, each member of the ensemble can be expressed as 
$$ y_j = \sum_{k=1}^{\mathcal{N}} (y_j, \psi_k)_X \psi_k$$
for each $j \in \{ 1, \cdots, n\}$. The POD method consists in choosing the orthonormal basis functions such that for every $\ell \in \{ 1 , \cdots, \mathcal{N} \}$ the mean square error between the elements $y_j$ (for any $j  \in \{ 1, \cdots, n\}$), and the corresponding $\ell$-th partial sum is minimized on average:
\begin{eqnarray}
\begin{split}
& \min_{ \{ \psi_k \}_{k=1}^{\ell} } \frac{1}{n} \sum_{j=1}^n \norm{y_j - \sum_{k=1}^{\ell} (y_j, \psi_k)_X \psi_k}_X^2 \\
& \text{subject to} \quad (\psi_k, \psi_t) = \delta_{kt} \quad \text{for any }k, t \in \{ 1,2,\cdots, \ell\}.
\end{split}
\label{eqn:pod-exp}
\end{eqnarray}
Here, $\delta_{kt}$ denotes the Kronecker-delta function. A solution $\{ \psi_k \}_{k=1}^{\ell}$ to \eqref{eqn:pod-exp} is called a POD-basis of rank $\ell$. We introduce the correlation matrix 
$$ K = \left ( \frac{1}{n} (y_j, y_i)_X \right ) \in \mathbb{R}^{n \times n}$$
corresponding to the snapshots $\{ y_j \}_{j=1}^n$. The matrix $K$ is positive semi-definite and has rank $\mathcal{N}$. The minimization problem \eqref{eqn:pod-exp} can be reduced to an eigenvalue problem 
\begin{eqnarray}
Kv = \lambda v.
\label{eqn:pod-cor}
\end{eqnarray}
We sort all the positive eigenvalues in a decreasing order as $\lambda_1 \geq \lambda_2 \geq \cdots \geq \lambda_{\mathcal{N}} >0$ and the associated eigenvectors are denoted by $v_k$ with $k = 1,\cdots, \mathcal{N}$. It can be shown that the POD-basis of rank $\ell \in \mathbb{N}^+$ with $\ell \leq \mathcal{N}$ is formed by 
\begin{eqnarray}
\varphi_k = \frac{1}{\sqrt{\lambda_k}} \sum_{j=1}^{n} (v_k)_j y_j \quad \text{for } k = 1,\cdots, \ell.
\label{eqn:pod-basis}
\end{eqnarray}
Here, $(v_k)_j$ is the $j$-th component of the eigenvector $v_k$. The basis functions $\{ \varphi_k \}_{k=1}^{\ell}$ form a POD-basis of rank $\ell$ and we have the following error formula. 

\begin{proposition}
Let $\lambda_1 \geq \lambda_2 \geq \cdots \geq \lambda_{\mathcal{N}} >0$ be the positive eigenvalues of $K$ in \eqref{eqn:pod-cor} and $v_1, \cdots, v_{\mathcal{N}} \in \mathbb{R}^n$ be the associated eigenvectors. Then, $\{ \varphi_k \}_{k=1}^{\ell}$ given by \eqref{eqn:pod-basis} forms a set of POD-basis of rank $\ell$ with $\ell \leq \mathcal{N}$. Moreover, we have the error formula 
$$ \frac{1}{n} \sum_{j=1}^n \norm{y_j - \sum_{k=1}^{\ell} (y_j , \varphi_k)_X \varphi_k}_X^2 = \sum_{k=\ell+1}^\mathcal{N} \lambda_k.$$
\end{proposition}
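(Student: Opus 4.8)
The plan is to follow the classical variational argument for POD and reduce the constrained minimization \eqref{eqn:pod-exp} to a symmetric eigenvalue problem. First I would exploit the orthonormality constraint to rewrite the objective: for any orthonormal family $\{\psi_k\}_{k=1}^\ell$ in $X$ and any index $j$, Pythagoras' identity gives
\begin{eqnarray*}
\norm{y_j - \sum_{k=1}^\ell (y_j,\psi_k)_X\, \psi_k}_X^2 = \norm{y_j}_X^2 - \sum_{k=1}^\ell \abs{(y_j,\psi_k)_X}^2.
\end{eqnarray*}
Averaging over $j$, the term $\frac{1}{n}\sum_{j=1}^n \norm{y_j}_X^2$ is a fixed constant independent of the basis, so minimizing the mean projection error is equivalent to \emph{maximizing} the captured energy $\frac{1}{n}\sum_{j=1}^n \sum_{k=1}^\ell \abs{(y_j,\psi_k)_X}^2$ over all orthonormal $\ell$-tuples.

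Next I would introduce the linear operator $R: X \to X$ defined by $R\psi = \frac{1}{n}\sum_{j=1}^n (\psi, y_j)_X\, y_j$, which is self-adjoint, positive semi-definite, and of finite rank (hence compact) with range contained in $\mathcal{V}$. The crucial observation is the identity $\frac{1}{n}\sum_{j=1}^n \abs{(y_j,\psi)_X}^2 = (R\psi,\psi)_X$, so that the captured energy equals $\sum_{k=1}^\ell (R\psi_k, \psi_k)_X$. The problem is thereby recast as maximizing this trace-type functional over orthonormal families, a quantity governed entirely by the spectrum of $R$.

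Then I would establish the operator--matrix correspondence. A direct computation using $(y_i,y_j)_X = n K_{ij}$ shows that the functions $\varphi_k$ of \eqref{eqn:pod-basis} satisfy $R\varphi_k = \lambda_k \varphi_k$ and are orthonormal, so the nonzero spectrum of the $n\times n$ matrix $K$ in \eqref{eqn:pod-cor} is transferred to that of $R$ and the candidate optimal basis is identified. Invoking the Rayleigh--Ritz / Ky Fan maximum principle for the compact self-adjoint operator $R$, the maximum of $\sum_{k=1}^\ell (R\psi_k,\psi_k)_X$ over orthonormal $\ell$-tuples equals $\sum_{k=1}^\ell \lambda_k$ and is attained at $\psi_k = \varphi_k$. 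Finally, since $\frac{1}{n}\sum_{j=1}^n \norm{y_j}_X^2 = \operatorname{tr}(K) = \sum_{k=1}^{\mathcal{N}} \lambda_k$, subtracting the captured energy from the total energy yields exactly $\sum_{k=\ell+1}^{\mathcal{N}}\lambda_k$.

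The main obstacle is the optimality (Ky Fan / Rayleigh--Ritz) step rather than the algebraic verifications: one must prove that among all orthonormal $\ell$-tuples the eigenfunctions associated with the $\ell$ largest eigenvalues maximize $\sum_{k=1}^\ell (R\psi_k,\psi_k)_X$. This rests on the spectral theorem for the compact, self-adjoint, positive semi-definite operator $R$, and is cleanest to carry out by induction on $\ell$, greedily optimizing one basis vector at a time and checking that each successive maximizer may be taken orthogonal to the previously chosen eigenfunctions. A secondary technical point is to justify restricting attention to the finite-dimensional ensemble $\mathcal{V}$, so that the operator eigenproblem for $R$ reduces exactly to the matrix eigenproblem \eqref{eqn:pod-cor} and only the $\mathcal{N}$ positive eigenvalues $\lambda_1,\dots,\lambda_{\mathcal{N}}$ contribute to the error formula.
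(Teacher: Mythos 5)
The paper does not actually prove this proposition: it is quoted as a known result from the POD literature (Kunisch--Volkwein, \cite{kunisch2001galerkin}), so there is no in-paper argument to compare against. Your outline is the standard and correct route to it: Pythagoras reduces the minimization to maximizing the captured energy, the finite-rank self-adjoint operator $R\psi = \tfrac{1}{n}\sum_j (\psi,y_j)_X\,y_j$ converts that into a trace functional, the identity $R\varphi_k = \lambda_k\varphi_k$ ties the operator spectrum to the matrix eigenproblem \eqref{eqn:pod-cor}, and Ky Fan's maximum principle delivers optimality; note also that the stated error formula for the specific basis $\{\varphi_k\}$ follows directly from $(R\varphi_k,\varphi_k)_X=\lambda_k$ and $\operatorname{tr}(K)=\sum_{k=1}^{\mathcal N}\lambda_k$, even before optimality is invoked. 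One detail you should make explicit: with the paper's scaling $K_{ij}=\tfrac{1}{n}(y_j,y_i)_X$ and $\varphi_k=\lambda_k^{-1/2}\sum_j (v_k)_j y_j$, a direct computation gives $(\varphi_k,\varphi_l)_X = \tfrac{n\lambda_k}{\sqrt{\lambda_k\lambda_l}}\,v_k\cdot v_l$, so orthonormality of the $\varphi_k$ requires normalizing the eigenvectors to $\lvert v_k\rvert_{\mathbb{R}^n}^2 = 1/n$ (equivalently, inserting a factor $1/\sqrt{n}$ into \eqref{eqn:pod-basis} with unit eigenvectors); this is a convention issue inherited from the paper's statement rather than a flaw in your argument, but the ``direct computation'' you wave at does not close without fixing it.
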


In practice, we shall make use of the decay property of eigenvalues in $\lambda_k$ and choose the first $\ell$ dominant eigenvalues such that the ratio $\zeta := \frac{\sum_{k=\ell+1}^{\mathcal{N}} \lambda_k}{\sum_{k=1}^{\mathcal{N}} \lambda_k}$ is small enough to achieve an expected accuracy, for instance $\zeta = 1\%$. One would prefer the eigenvalues decays as fast as possible so that one can ensure high accuracy with few POD basis functions. 

\section{Multiscale Method}\label{sec:gmsfem}
In this section, we develop the computational multiscale method in order to solve the parabolic wave approximation. For spatial discretization, we will apply the CEM-GMsFEM. In particular, for each node $z_i \in D$ along the $z$-direction, we will construct a set of multiscale basis functions in the spirit of CEM-GMsFEM. To further reduce the dimension of the multiscale space, we will perform POD procedure related to these CEM basis functions. 
Once the multiscale space is constructed, one can use leapfrog scheme to discretize time derivatives and solve the resulting fully-discretized problem. 

\subsection{Spatial Discretization: CEM-GMsFEM}
In this section, we briefly outline the framework of CEM-GMsFEM and present the construction of the multiscale space. 
First, we introduce fine and coarse grids for the computational domain.
Let $\mathcal{T}^H = \{ K_i \}_{i=1}^N$ be a conforming partition of the domain $\Omega$ with mesh size $H>0$ defined by
$$H := \max_{K \in \mathcal{T}^H} \Big(\max_{x, y \in K} \abs{x-y}\Big).$$ 
We refer to this partition as coarse grid. We denote $N \in \mathbb{N}^+$ the total number of coarse elements. Subordinate to the coarse grid, we define the fine grid partition $\mathcal{T}^h$ (with mesh size $h \ll H$) by refining each coarse element $K \in \mathcal{T}^H$ into a connected union of finer elements. We assume the refinement above is performed such that $\mathcal{T}^h$ is also a conforming partition of the domain $\Omega$. Denote $N_c$ the number of interior coarse grid nodes of $\mathcal{T}^H$ and $\{ x_i \}_{i=1}^{N_c}$ the collection of interior coarse nodes in the coarse grid. 

\subsubsection{Spectral Decomposition}

We present the construction of the auxiliary multiscale basis functions. Let $K_i \in \mathcal{T}^H$ be a coarse block. Define $V(K_i)$ as the restriction of the abstract space $V$ on the coarse element $K_i$. For each $z_k \in D$, we consider a local spectral problem: Find $\sigma_j^{(i,k)} \in \mathbb{R}$ and $\phi_j^{(i,k)} \in V(K_i)$ such that 
\begin{eqnarray} \label{eqn:spectral}
a_i(\phi_j^{(i,k)}, v; z_k) = \sigma_j^{(i,k)} s_i(\phi_j^{(i,k)},v; z_k) \quad \text{for all}~ v \in V(K_i).
\end{eqnarray}
Here, $a_i: V(K_i) \times V(K_i)$ is a symmetric non-negative definite bilinear form and $s_i: V(K_i) \times V(K_i)$ is a symmetric positive definite bilinear form. 
We remark that the above problem is solved on the fine mesh in the actual computations. Based on the analysis, we choose 
$$ a_i(v,w; z_k) := \int_{K_i} c(z_k) \nabla v \cdot \nabla w ~ dx, \quad s_i(v,w; z_k) := \int_{K_i} \tilde \kappa(z_k) v w ~ dx ,$$
where $\tilde \kappa := \sum_{j=1}^{N_c} c(z_k) \abs{\nabla \chi_{j,k}^{\text{ms}}}^2$. 
The functions $\{ \chi_{j,k}^{\text{ms}} \}_{j=1}^{N_c}$ are the standard multiscale finite element basis functions which satisfy the partition of unity property. More precisely, $\chi_{j,k}^{\text{ms}}$ is the solution of the following system: 
\begin{eqnarray*}
\nabla \cdot (c(z_k) \nabla \chi_{j,k}^{\text{ms}}) & = 0 \quad &\text{in each } K \subset \omega_j, \\
\chi_{j,k}^{\text{ms}} & = g_j \quad &\text{on } \partial K \setminus \partial \omega_j,\\
\chi_{j,k}^{\text{ms}} & = 0 \quad &\text{on } \partial \omega_j. 
\end{eqnarray*}
The function $g_j$ is continuous and linear along the boundary of the coarse element. We assume that the eigenvalues $\sigma_j^{(i,k)}$ are arranged in ascending order and we pick $\ell_{i,k} \in \mathbb{N}^+$ corresponding eigenfunctions to construct the local auxiliary space $V_{\text{aux}}^{(i,k)}:= \text{span} \{ \phi_j^{(i,k)} : j = 1, \cdots, \ell_i \}$. 
We assume the normalization $s_i\left ( \phi_j^{(i,k)}, \phi_j^{(i,k)}; z_k \right ) = 1$. 
After that, we define the global auxiliary multiscale space $V_{\text{aux}}^k := \bigoplus_{i=1}^{N} V_{\text{aux}}^{(i,k)}$. We remark that the global auxiliary multiscale space is used to construct multiscale basis functions that are orthogonal to the auxiliary space with respect to the weighted $L^2$ inner product $s(\cdot,\cdot; z_k)$. 

Note that the bilinear form $s_i(\cdot, \cdot; z_k)$ defines an inner product with norm $\norm{\cdot}_{s(K_i; k)} := \sqrt{s_i(\cdot,\cdot; z_k)}$ in the local auxiliary space $V_{\text{aux}}^{(i,k)}$. Based on these local inner products and norms, one can naturally define a new inner product and norm for the global auxiliary space $V_{\text{aux}}$ as follows: for all $v, w \in V_{\text{aux}}^k$, 
\begin{eqnarray}
    s(v,w; z_k) := \sum_{i=1}^N s_i(v,w; z_k) \quad \text{and} \quad \norm{v}_{s(z_k)} := \sqrt{s(v,v; z_k)}.
    \label{l2norm}
\end{eqnarray}
Note that if $\{ \chi_{j,k}^{\text{ms}} \}_{j=1}^{N_c}$ is a set of bilinear partition of unity, then 
$\norm{v}_{s(z_k)} \leq H^{-1} (\max\{c\})^{1/2} \norm{v}$
for any $v \in L^2(\Omega)$. 
In addition, we define $\pi_k: L^2(\Omega) \to V_{\text{aux}}^k$ as the projection with respect to the inner product $s(\cdot,\cdot;z_k)$ such that 
$$ \pi_k(u) := \sum_{i=1}^N \sum_{j=1}^{\ell_i} s_i (u, \phi_j^{(i,k)};z_k) \phi_j^{(i,k)} \quad \text{for all } u \in L^2(\Omega). $$

\subsubsection{The construction of multiscale basis functions}
In this section, we present the construction of the multiscale basis functions. First, we define an oversampling region for each coarse element. Specifically, given a non-negative integer $m \in \mathbb{N}$ and a coarse element $K_i$, we define the oversampling region $K_{i,m} \subset \Omega$ such that 
$$ K_{i,m} := \left \{ \begin{array}{lr} 
K_i & \text{if } m = 0, \\
\displaystyle{\bigcup \{ K: K_{i,m-1} \cap K \neq \emptyset \}} & \text{if } m \geq 1.
\end{array} \right .$$

See Figure \ref{fig:mesh} for an illustration of oversampling region. For simplicity, we denote $K_i^+$ the oversampled region $K_{i,m}$ for some nonnegative integer $m$. 

\begin{figure}[ht]
\centering
\includegraphics[width = 2.5in]{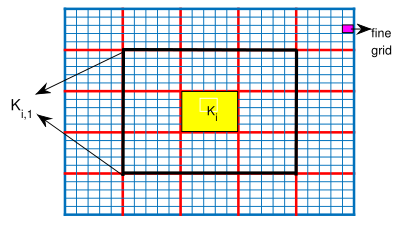}
\caption{Oversampling region with $m = 1$.} 
\label{fig:mesh}
\end{figure}

Recall that $V(K_i^+)$ is the restriction of $V$ on the coarse patch $K_i^+$. Let $V_0(K_i^+)$ be the subspace of $V(K_i^+)$ with zero trace on the boundary $\partial K_i^+$. 
For each eigenfunction $\phi_j^{(i,k)} \in V_{\text{aux}}^k$, we define the multiscale basis $\psi_{j,\text{ms}}^{(i,k)} \in V_0(K_i^+)$ to be the solution of the equation: 
\begin{eqnarray}
a(\psi_{j,\text{ms}}^{(i,k)}, v) + s\left(\pi( \psi_{j,\text{ms}}^{(i,k)}), \pi_k (v) \right )  =  s(\phi_j^{(i,k)},v) \quad \text{for all } v \in V_{0}(K_i^+) \label{eqn:msv}.
\end{eqnarray}
Then, the multiscale space is defined as 
$ V_{\text{ms}}^k := \text{span} \left \{ \psi_{j,\text{ms}}^{(i,k)}: i = 1,\cdots, N, ~ j = 1,\cdots, \ell_{i,k} \right \}$. 
By construction, we have $\text{dim}(V_{\text{ms}}^k) = \text{dim}(V_{\text{aux}}^k)$. After that, we define 
$$V_{\text{CEM}} := \text{span} \left \{  \psi_{j,\text{ms}}^{(i,k)}: i = 1,\cdots, N, ~ j = 1,\cdots, \ell_{i,k}, ~ k = 0, \cdots, K \right \}.$$
We will perform the procedure of POD for this space to construct the multiscale space that will be used in actual simulation.

\subsection{Construction of multiscale reduced basis functions using POD} 
In this section, we present the construction of multiscale reduced basis functions using the POD technique. First, we define $ \{ v_{\text{CEM},k} \}_{k=0}^K \subset V_{\text{CEM}}$ to be the solution of the following equation: 
\begin{eqnarray}
 \left ( \ddot{v}_{\text{CEM},k}, w; z_k \right )_c + \frac{1}{\Delta z} \left ( \dot{v}_{\text{CEM},k} - \dot{v}_{\text{CEM},k-1}, w \right ) + \frac{1}{2} a(v_{\ms,k},w; z_k) = 0 \quad \tforall w \in V_{\text{CEM}}.
\label{eqn:quasi-dis-cem}
\end{eqnarray}
\textcolor{red}{
In the abstract framework of POD, we set the space $X = L^2(0,T;H^1(\Omega))$.  
We define the inner product associated with $X$ as $\left \langle u, v \right \rangle : = \int_0^T (\nabla u, \nabla v)+(u, v) ~ dt$ for $u, v\in X$. The corresponding norm will then be defined as $\norm{\cdot}_{L^2(0,T;H^1(\Omega))} := \sqrt{\left \langle \cdot, \cdot \right \rangle}$.}
Next, we define the snapshot space $\mathcal{V}$ \textcolor{red}{as the collection of}: 
$$
y_j := v_{\text{CEM}} (t, z_{j-1}), \quad  y_{j+K+1} := \ddot{v}_{\text{CEM}}(t, z_{j-1}), \quad j = 1,\cdots, K+1, 
$$
and
$$
y_{j+2K+2} = \Tilde{\partial} \dot{v}_{\ms}(t, z_j) := 
\frac{\dot{v}_{\text{CEM}}(t, z_j)-\dot{v}_{\text{CEM}}(t, z_{j-1})}{\Delta z}, \quad j = 1,\cdots ,K.$$ 
\textcolor{red}{Then, we perform the POD procedure  on the snapshot space $\mathcal{V}$ as described in Section \ref{proper_orthogonal_decomposition}. It should be noted that the correlation matrix is defined as $K_{ij} := \frac{1}{3K+2}(y_j, y_i)_{H^1}$.}
We denote $\{\psi_k\}_{k = 1}^\mathcal{N}$ the corresponding POD basis functions with cardinality $\mathcal{N} \in \mathbb{N}^+$. 
We then define $V_{\text{POD}}^\ell := \spa\{\psi_k\}_{k = 1}^\ell$ for a given positive number $\ell \leq \mathcal{N}$. 

\subsection{Fully Discretization}
In this section, we present the fully discretization for the problem \eqref{eqn:quasi-dis}. 
We can further consider the discretization in time. We divide $(0,T]$ into $N$ pieces and write 
$$ v^n = v(t^n) \quad \text{with} \quad t^n = n\Delta t \quad \text{and} \quad \Delta t = \frac{T}{N}.$$
Specifically, we use first- and second-order central difference schemes. 
We define $\tau := \Delta t \Delta z$. The fully discretization reads: for $k = 1,\cdots, K-1$ and $n = 1, \cdots, N-1$, find $\{ v_k^n \} \subset V_{\text{POD}}^{\ell}$ such that 
\begin{eqnarray}
 \left ( \frac{ v_k^{n+1} - 2v_k^n + v_k^{n-1}}{(\Delta t)^2}, w; z_k \right )_c +  \left ( \frac{v_k^{n+1} - v_k^{n-1}}{2\tau}, w \right ) + \frac{1}{2} a(v_k^n,w; z_k) = \left ( \frac{v_{k-1}^{n+1} - v_{k-1}^{n-1}}{2\tau} , w \right)
\label{eqn:fully-dis}
\end{eqnarray}
for all $w \in V_{\text{POD}}^{\ell}$. In short, we discretize implicitly in $z$ and explicitly in time. 
We show that the fully discretization is stable in time. To this aim, we first recall the inverse inequality for the multiscale space, which is proved in \cite{chetverushkin2020computational}. 
\begin{proposition}[Lemma 4.5 in \cite{chetverushkin2020computational}] Assume that $\{ \chi_{j,k}^{\text{ms}} \}_{j=1}^{N_c}$ is a set of bilinear partition of unity. For any $v \in V_{\text{CEM}}$, there is a constant $C_{\text{inv}}>0$ such that
\begin{eqnarray*} \label{eqn:inv-ineq}
\norm{\nabla v}_a \leq C_{\text{inv}} H^{-1} \norm{v}. 
\end{eqnarray*}
\end{proposition}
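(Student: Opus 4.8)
The plan is to read the claimed bound as an estimate in the energy norm, $\norm{v}_{a(z_k)} \le C_{\text{inv}} H^{-1}\norm{v}$, and to observe that the gain over the generic fine-scale inverse inequality (which only yields a factor $h^{-1}$) comes entirely from the energy-minimizing construction of the multiscale basis together with the partition-of-unity bound $\norm{w}_{s(z_k)} \le H^{-1}(\max c)^{1/2}\norm{w}$ recorded above. Since $c \in L^\infty(D\times\Omega)$, the forms $a(\cdot,\cdot;z_k)$ are mutually comparable with constants depending only on $c_{\max}$ and $\min c$, so it suffices to prove the inequality for a function in a single $V_{\text{ms}}^k$ tested with the matching form $a(\cdot,\cdot;z_k)$, with a constant uniform in $k$; the general element of $V_{\text{CEM}}$ is then handled by expanding in the full basis and summing with the finite-overlap bound below.

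Fix $k$ and write a generic $v = \sum_{i,j} c_j^{(i,k)}\psi_{j,\text{ms}}^{(i,k)}$, and set $v_{\text{aux}} := \sum_{i,j} c_j^{(i,k)}\phi_j^{(i,k)}\in V_{\text{aux}}^k$. It is cleanest to carry out the computation first on the ideal (global) space $V_{\text{glo}}^k$, where the analogue of \eqref{eqn:msv} holds against all test functions in $V$, and then transfer to the localized space $V_{\text{ms}}^k$ via the exponential-decay localization estimate of \cite{chung2018constraint}. Summing the defining relation against $w = v$ and using that $\pi_k$ is the $s(\cdot,\cdot;z_k)$-orthogonal projection onto $V_{\text{aux}}^k$ (so $s(\phi_j^{(i,k)}, v;z_k) = s(\phi_j^{(i,k)}, \pi_k v; z_k)$), I obtain the energy identity
\[ a(v,v;z_k) + \norm{\pi_k v}_{s(z_k)}^2 = s(v_{\text{aux}}, \pi_k v; z_k). \]
A Cauchy--Schwarz and Young estimate on the right-hand side then gives $a(v,v;z_k) \le \tfrac12\norm{v_{\text{aux}}}_{s(z_k)}^2$. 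Because the $\phi_j^{(i,k)}$ are $s_i$-orthonormal within each block and supported on disjoint coarse elements, $\norm{v_{\text{aux}}}_{s(z_k)}^2 = \sum_{i,j}(c_j^{(i,k)})^2$, so the energy of $v$ is controlled by the squared $\ell^2$-norm of its multiscale coefficients.

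It remains to convert $\sum_{i,j}(c_j^{(i,k)})^2$ into $H^{-2}\norm{v}^2$, and this frame-type lower bound is where I expect the real difficulty to lie. The goal is an $L^2$-stability estimate $\sum_{i,j}(c_j^{(i,k)})^2 \lesssim H^{-2}\norm{v}^2$, equivalently that the synthesis map from coefficients to functions is bounded below by a factor $\sim H$ in $L^2$. I would establish this from two ingredients: (i) the bounded overlap of the oversampled supports $K_i^+$, which lets one sum local contributions with an $m$-dependent but $H$-independent combinatorial constant; and (ii) the partition-of-unity inequality $\norm{w}_{s(z_k)}\le H^{-1}(\max c)^{1/2}\norm{w}$ applied to $\pi_k v$ (or $v_{\text{aux}}$), which is precisely what produces the single power of $H^{-1}$. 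The delicate point is controlling $\norm{v_{\text{aux}}}_{s(z_k)}$ by $\norm{\pi_k v}_{s(z_k)}$: this rests on the near-identity property $\pi_k\psi_{j,\text{ms}}^{(i,k)}\approx\phi_j^{(i,k)}$ of the construction \eqref{eqn:msv}, and making it quantitative with a constant independent of the contrast is exactly where the localization/exponential-decay analysis of \cite{chung2018constraint} must be invoked. Assembling (i)--(ii) with the energy identity yields $\norm{v}_{a(z_k)}^2 \lesssim H^{-2}\norm{v}^2$, hence the inverse inequality with $C_{\text{inv}}$ depending only on $c_{\max}$, $\min c$, the overlap of the oversampled regions, and the eigenvalue threshold $\Lambda$.
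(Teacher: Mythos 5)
First, a point of orientation: the paper does not prove this proposition at all --- it is imported verbatim (as ``Lemma 4.5 in [the QGD reference]'') and used as a black box in the CFL/stability argument of Lemma \ref{lem:CFL}. So there is no in-paper proof to compare yours against, and your attempt has to be judged on its own. The first half of your sketch is sound: reading the left-hand side as $\norm{v}_{a(z_k)}$ (the printed $\norm{\nabla v}_a$ is surely a typo), passing to the global construction, summing the defining relation against $w=v$ to get $a(v,v;z_k)+\norm{\pi_k v}_{s(z_k)}^2=s(v_{\text{aux}},\pi_k v;z_k)$, and using $s_i$-orthonormality to identify $\norm{v_{\text{aux}}}_{s(z_k)}^2$ with $\sum_{i,j}(c_j^{(i,k)})^2$ are all correct steps, and the partition-of-unity bound $\norm{w}_{s(z_k)}\le H^{-1}(\max c)^{1/2}\norm{w}$ is indeed the only place a power of $H^{-1}$ can come from.

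The gap is that the step you yourself flag as ``where the real difficulty lies'' --- the frame-type lower bound $\sum_{i,j}\bigl(c_j^{(i,k)}\bigr)^2\lesssim H^{-2}\norm{v}^2$, equivalently $\norm{v_{\text{aux}}}_{s(z_k)}\lesssim\norm{\pi_k v}_{s(z_k)}$ --- is the entire content of the lemma, and nothing in your sketch actually delivers it. Worse, the only inequality your energy identity yields between these two quantities is $\norm{\pi_k v}_{s(z_k)}\le\norm{v_{\text{aux}}}_{s(z_k)}$, i.e.\ the \emph{wrong} direction. The heuristic you invoke to close the gap, namely $\pi_k\psi_{j,\text{ms}}^{(i,k)}\approx\phi_j^{(i,k)}$, is not a property of the relaxed construction \eqref{eqn:msv} used in this paper: that formulation penalizes $\pi_k\psi$ rather than constraining $s(\pi_k\psi_{j,\text{ms}}^{(i,k)},\phi_{j'}^{(i',k)})=\delta_{ii'}\delta_{jj'}$, so the near-identity statement would itself require the quantitative stability lemma (boundedness below of $\pi_k$ restricted to $V_{\text{glo}}^k$, with constant controlled by $\Lambda$) that you are trying to avoid proving. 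In addition, the reduction from a general $v\in V_{\text{CEM}}$ to a single level $k$ is glossed over: $V_{\text{CEM}}$ is the span of the basis functions over \emph{all} $z_k$, these are not mutually orthogonal across levels, and ``expanding in the full basis and summing'' does not obviously preserve constants independent of $K$. As written, the proposal is an honest outline of where the proof must go, but it does not constitute a proof.
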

Then, we have the following stability result for \eqref{eqn:fully-dis}. %
\begin{lemma}\label{lem:CFL}
Suppose that the following CFL condition 
$$\frac{1}{c_{\max}} - \frac{1}{4} C_{\text{inv}}^2 H^{-2} (\Delta t)^2
\geq \delta,$$
holds for some $\delta >0$. Then, we have the following stability estimate: 
$$ \frac{1}{4 \tau} \sum_{n=1}^N \norm{v_K^{n+1} - v_K^{n-1}}^2 + \sum_{k=1}^K \mathcal{E}_{N,k} \leq \frac{1}{4 \tau} \sum_{n=1}^N \norm{v_0^{n+1} - v_0^{n-1}}^2 + \sum_{k=1}^K \mathcal{E}_{0,k}.$$
\end{lemma}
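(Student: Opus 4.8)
The plan is to replay the energy argument of Lemma \ref{semi_stability}, replacing the continuous test function $\dot v_k$ by the discrete velocity and the time integral by a sum over time levels. Concretely, I would test the fully discrete equation \eqref{eqn:fully-dis} with $w = (v_k^{n+1}-v_k^{n-1})/(2\Delta t) \in V_{\text{POD}}^{\ell}$ and multiply the resulting identity by $2\Delta t$. For the mass term I would apply the elementary identity $(a-b,a+b)=\norm{a}^2-\norm{b}^2$, with $a=v_k^{n+1}-v_k^n$ and $b=v_k^n-v_k^{n-1}$, to rewrite $(v_k^{n+1}-2v_k^n+v_k^{n-1},\,v_k^{n+1}-v_k^{n-1})_c$ as the telescoping difference $\norm{v_k^{n+1}-v_k^n}_{c(z_k)}^2-\norm{v_k^n-v_k^{n-1}}_{c(z_k)}^2$; for the stiffness term, symmetry of $a(\cdot,\cdot;z_k)$ turns $a(v_k^n,\,v_k^{n+1}-v_k^{n-1};z_k)$ into $a(v_k^{n+1},v_k^n;z_k)-a(v_k^n,v_k^{n-1};z_k)$. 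These two pieces assemble into the one-step increment $\mathcal{E}_{n,k}-\mathcal{E}_{n-1,k}$ of the discrete energy $\mathcal{E}_{n,k}:=(\Delta t)^{-2}\norm{v_k^{n+1}-v_k^n}_{c(z_k)}^2+\tfrac12 a(v_k^{n+1},v_k^n;z_k)$.

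Next I would handle the $z$-coupling. Writing $w_k^n:=v_k^{n+1}-v_k^{n-1}$, the diagonal coupling term contributes $\frac{1}{2\tau}\norm{w_k^n}^2$ and the right-hand side contributes $\frac{1}{2\tau}(w_{k-1}^n,w_k^n)$; applying Cauchy--Schwarz and Young to the latter bounds the net coupling below by the telescoping-in-$k$ quantity $\frac{1}{4\tau}(\norm{w_k^n}^2-\norm{w_{k-1}^n}^2)$. Thus the tested, rescaled identity becomes the inequality $(\mathcal{E}_{n,k}-\mathcal{E}_{n-1,k})+\frac{1}{4\tau}(\norm{w_k^n}^2-\norm{w_{k-1}^n}^2)\le 0$. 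Summing over the time levels $n$ and over the layers $k=1,\dots,K$ telescopes both contributions, leaving $\sum_{k=1}^K\mathcal{E}_{N,k}+\frac{1}{4\tau}\sum_n\norm{v_K^{n+1}-v_K^{n-1}}^2$ on one side and the same expression with $\mathcal{E}_{0,k}$ and $v_0$ on the other, which is exactly the asserted estimate. I would take some care with the endpoint indices $n=0,N$ and $k=0,K$, but this is routine.

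The main obstacle, and the only place the CFL hypothesis is used, is showing that $\mathcal{E}_{n,k}$ is a genuinely non-negative energy, so that the telescoped bound is not vacuous: the cross term $\tfrac12 a(v_k^{n+1},v_k^n;z_k)$ is indefinite. I would split it via the parallelogram-type identity $a(v_k^{n+1},v_k^n;z_k)=\tfrac12\norm{v_k^{n+1}}_{a(z_k)}^2+\tfrac12\norm{v_k^n}_{a(z_k)}^2-\tfrac12\norm{v_k^{n+1}-v_k^n}_{a(z_k)}^2$ and control the single negative term by the inverse inequality of the preceding Proposition, which applies since $V_{\text{POD}}^{\ell}\subseteq V_{\text{CEM}}$, giving $\norm{v_k^{n+1}-v_k^n}_{a(z_k)}^2\le C_{\text{inv}}^2 H^{-2}\norm{v_k^{n+1}-v_k^n}^2$. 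Combining this with the kinetic lower bound $\norm{v_k^{n+1}-v_k^n}_{c(z_k)}^2\ge c_{\max}^{-1}\norm{v_k^{n+1}-v_k^n}^2$, the dangerous contributions collapse to $(\Delta t)^{-2}\big(c_{\max}^{-1}-\tfrac14 C_{\text{inv}}^2 H^{-2}(\Delta t)^2\big)\norm{v_k^{n+1}-v_k^n}^2$, which is non-negative precisely under the stated CFL condition. Lining up the constants ($\tfrac14$, $c_{\max}$, $H^{-2}$) in this coercivity step with those in the CFL inequality is the delicate part; everything else reduces to bookkeeping of the telescoping sums.
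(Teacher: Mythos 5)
Your proposal is correct and follows essentially the same route as the paper's proof: test with $v_k^{n+1}-v_k^{n-1}$, identify the discrete energy $\mathcal{E}_{n,k}=(\Delta t)^{-2}\norm{v_k^{n+1}-v_k^n}_c^2+\tfrac12 a(v_k^{n+1},v_k^n)$, telescope in $n$ and then in $k$ after a Cauchy--Schwarz/Young treatment of the layer coupling, and use the polarization identity together with the inverse inequality and the bound $\norm{\cdot}_c^2\ge c_{\max}^{-1}\norm{\cdot}^2$ to show $\mathcal{E}_{n,k}\ge 0$ under the CFL condition. The constants and the role of the CFL hypothesis match the paper exactly.
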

\begin{proof}
Define the energy as follows: 
\begin{eqnarray*}
 \mathcal{E}_{n,k} :=   \norm{\frac{v_k^{n+1} - v_k^{n}}{\Delta t}}_c^2 + \frac{1}{2} a(v_k^{n+1} , v_k^{n}). 
\end{eqnarray*}
Using the inverse inequality for the multiscale functions, one can show that 
\begin{align*}
\mathcal{E}_{n,k} & =  \norm{\frac{v_k^{n+1} - v_k^{n}}{\Delta t}}_c^2 + \frac{1}{2} a(v_k^{n+1} , v_k^{n})  \\
& =  \norm{\frac{v_k^{n+1} - v_k^{n}}{\Delta t}}_c^2 +  \frac{1}{4}  a(v_k^{n+1}, v_k^{n+1}) + \frac{1}{4} a(v_k^{n}, v_k^{n}) - \frac{1}{4} a(v_k^{n+1} - v_k^{n}, v_k^{n+1} - v_k^{n})\\
& \geq \norm{\frac{v_k^{n+1} - v_k^{n}}{\Delta t}}_c^2 +   
\frac{1}{4} \left ( \norm{v_k^{n+1}}_a^2 + \norm{v_k^{n}}_a^2 \right ) - \frac{1}{4}  C_{\text{inv}}^2 H^{-2} (\Delta t)^2 \norm{\frac{v_k^{n+1} - v_k^{n}}{\Delta t}}^2\\
&\geq
\big(\frac{1}{c_{\max}} - \frac{1}{4} C_{\text{inv}}^2 H^{-2} (\Delta t)^2\big) \norm{\frac{v_k^{n+1} - v_k^{n}}{\Delta t}}^2 + \frac{1}{4} \left ( \norm{v_k^{n+1}}_a^2 + \norm{v_k^{n}}_a^2 \right ) \geq 0.
\end{align*}
Next, taking $w = v_k^{n+1} - v_k^{n-1}$ in \eqref{eqn:fully-dis} and denoting $f^n_{k-1} = v_{k-1}^{n+1}-v_{k-1}^{n-1}$, we have 
\begin{eqnarray*}
\frac{1}{2\tau} \norm{v_k^{n+1} - v_k^{n-1}}^2 + \norm{\frac{v_k^{n+1} - v_k^n}{\Delta t}}_c^2  - \norm{\frac{v_k^n - v_k^{n-1}}{\Delta t}}_c^2
+\frac{1}{2}a ( v_k^n , v_k^{n+1} - v_k^{n-1}) = \frac{1}{2\tau} \left (f^n_{k-1}, v_k^{n+1} - v_k^{n-1}\right).
\end{eqnarray*}
Then, it implies that 
\begin{eqnarray*}
\begin{split}
\frac{1}{2\tau} \norm{v_k^{n+1} - v_k^{n-1}}^2+ \mathcal{E}_{n,k} - \mathcal{E}_{n-1,k} & =   \frac{1}{2\tau} \left (f^n_k, v_k^{n+1} - v_k^{n-1}\right)\\
& \leq \frac{1}{2\tau}\norm{v_{k-1}^{n+1} - v_{k-1}^{n-1}} \norm{v_k^{n+1} - v_k^{n-1}} \\
& \leq \frac{1}{4\tau} \left ( \norm{v_{k-1}^{n+1} - v_{k-1}^{n-1}}^2 + \norm{v_k^{n+1} - v_k^{n-1}}^2 \right ) \\
\implies \frac{1}{4\tau}  \norm{v_k^{n+1} - v_k^{n-1}}^2+ \mathcal{E}_{n,k}  & \leq  \frac{1}{4\tau}  \norm{v_{k-1}^{n+1} - v_{k-1}^{n-1}}^2 + \mathcal{E}_{n-1,k}.
\end{split}
\end{eqnarray*}
Summing over $n = 1,\cdots, N$, we have 
$$\frac{1}{4\tau} \sum_{n=1}^N \norm{v_k^{n+1} - v_k^{n-1}}^2+ \mathcal{E}_{N,k} \leq  \frac{1}{4\tau}  \sum_{n=1}^N \norm{v_{k-1}^{n+1} - v_{k-1}^{n-1}}^2 + \mathcal{E}_{0,k}.$$
Therefore, summing over $k = 1,\cdots, K$, we have 
$$\frac{1}{4\tau} \sum_{n=1}^N \left ( \sum_{k=1}^K \norm{v_k^{n+1} - v_k^{n-1}}^2 - \sum_{k=0}^{K-1} \norm{v_{k}^{n+1} - v_{k}^{n-1}}^2 \right )+ \sum_{k=1}^K \mathcal{E}_{N,k} \leq  \sum_{k=1}^K \mathcal{E}_{0,k}$$
and 
$$ \frac{1}{4 \tau} \sum_{n=1}^N \norm{v_K^{n+1} - v_K^{n-1}}^2 + \sum_{k=1}^K \mathcal{E}_{N,k} \leq \frac{1}{4 \tau} \sum_{n=1}^N \norm{v_0^{n+1} - v_0^{n-1}}^2 + \sum_{k=1}^K \mathcal{E}_{0,k}.$$
This completes the proof. 
\end{proof}
\begin{remark}
It should be noted that the CFL condition depends on the quantity $c_{\max} := \norm{c}_{L^\infty(D \times \Omega)}$.
It also relies on the constant $C_{\text{inv}}$ which comes from the inverse inequality (see Lemma 4.5 in \cite{chetverushkin2020computational}). From the proof of that lemma, the constant $C_{\text{inv}}$ is independent of the ratio of the contrast values in the permeability. 
We are using the CEM-GMsFEM, which is a coarse mesh method, hence the coarse mesh size $H$ will make the CFL much less restrictive. The idea to improve the CFL condition is to use the implicit scheme in time; more precisely, one can use 
$a(v_k^{n+1},w; z_k)$ in \eqref{eqn:fully-dis}. 
This will become one of our future works.
\end{remark}

\section{Convergence Analysis} \label{sec:analysis}
In this section, we present the convergence analysis of the proposed POD-based multiscale method for the parabolic wave equation. 
Let $v_{\ms} \in V_{\ms}$ be the semi-discretized solution which solves the problem \eqref{eqn:quasi-dis-cem}. 
Throughout this section, we assume that the coercivity and boundedness of this bilinear form hold. Specifically, there exist two constants $\gamma$ and $\beta$, independent of any $z_k \in D$,  such that
\begin{align}
\gamma \|u\|_{H_1(\Omega)}^2\leq a(u, u; z_k) \tand
a(u, w; z_k) \leq \beta \|u\|_{H_1(\Omega)}\|w\|_{H_1(\Omega)}
\end{align}
for any $u, w\in V$. We denote $(\cdot,\cdot)_{H^1}$ the $H^1$-inner product such that $(u,v)_{H^1} := (\nabla u, \nabla v) + (u,v)$ for any $u, v \in V$ with its associated norm being $\norm{\cdot}_{H^1} := \sqrt{(\cdot,\cdot)_{H^1}}$. 

We have the following lemma for the proper orthogonal decomposition. 

\begin{lemma}[cf. Proposition 1 
in \cite{kunisch2001galerkin}]
Let $v_{\ms} \in V_{\ms}$ be the semi-discretized solution which solves the problem \eqref{eqn:quasi-dis-cem} and  
$K_{ij} := \frac{1}{3K+2}(y_j, y_i)_{H^1}$ be the correlation matrix and $\lambda_k$ be the corresponding eigenvalues sorted ascending. For any integer $\ell$ with $0< \ell \leq \mathcal{N}$, the following error formula holds: 
\begin{eqnarray}
\begin{split}
   \frac{1}{3K+2} & \left [ \int_0^T \sum_{k = 1}^{K+1}  \norm{v_{\ms} (t, z_k)-\sum_{j = 1}^{\ell} \left (v_{\ms}(t, z_k), \psi_j\right )_{H^1}\psi_j}_{H^1}^2 \right . \\
& + \left . \sum_{k = 1}^K \norm{\Tilde{\partial} \dot{v}_{\ms}(t, z_k)-\sum_{j = 1}^{\ell} \left (\Tilde{\partial}\dot{v}_{\ms}(t, z_k), \psi_j\right)_{H^1}\psi_j }_{H^1}^2 \right . \\
& \left . + \sum_{k = 1}^{K+1} \norm{\ddot{v}_{\ms}(t, z_k)-\sum_{j = 1}^{\ell} \left (\ddot{v}_{\ms}(t, z_k), \psi_j\right)_{H^1}\psi_j}_{H^1}^2 dt \right ] = \sum_{k = \ell+1}^
\mathcal{N} \lambda_k. 
\end{split}
\end{eqnarray}
\label{pod}
\end{lemma}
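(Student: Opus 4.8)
The plan is to read the statement as a direct specialization of the abstract POD error identity recorded in the Proposition of Section~\ref{proper_orthogonal_decomposition} (equivalently, Proposition~1 of \cite{kunisch2001galerkin}), applied to the ensemble assembled from the semi-discrete solution $v_{\ms}$ of \eqref{eqn:quasi-dis-cem}. The essential observation is purely bookkeeping: the three families of snapshots listed just before the lemma together form an ensemble of exactly $n = 3K+2$ elements, so the single-index error identity of the general theory unpacks into the three separate sums appearing in the statement. There is no new estimate to prove, since the conclusion is an exact equality inherited from the spectral decomposition of the correlation matrix; the work is entirely in setting up the correspondence correctly.

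First I would fix the abstract data. I take $X = L^2(0,T;H^1(\Omega))$ with the inner product $\langle u,v\rangle = \int_0^T (\nabla u,\nabla v) + (u,v)\diff t$, and let the ensemble $\mathcal{V} = \spa\{y_1,\dots,y_{3K+2}\}$ consist of the $K+1$ trajectories $v_{\ms}(\cdot,z_{k})$, the $K+1$ trajectories $\ddot v_{\ms}(\cdot,z_{k})$, and the $K$ difference quotients $\Tilde{\partial}\dot v_{\ms}(\cdot,z_k)$, so that $n = 3K+2$. The correlation matrix $K_{ij} = \frac{1}{3K+2}(y_j,y_i)_{H^1}$ (with the inner product read in the sense of $X$) is symmetric and positive semi-definite of rank $\mathcal{N} = \dim\mathcal{V}$; its positive eigenvalues $\lambda_1 \geq \cdots \geq \lambda_{\mathcal{N}} > 0$ and the basis $\{\psi_k\}$ built through \eqref{eqn:pod-cor}--\eqref{eqn:pod-basis} are then exactly the data required by the abstract Proposition. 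I would note in passing that at least one snapshot is nonzero because $v_{\ms}$ solves a nontrivial evolution problem, so the hypotheses are satisfied.

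Next I would invoke the abstract identity
\[
\frac{1}{n}\sum_{j=1}^{n}\norm{y_j - \sum_{k=1}^{\ell}(y_j,\psi_k)_X\,\psi_k}_X^2 = \sum_{k=\ell+1}^{\mathcal{N}}\lambda_k
\]
and rewrite its left-hand side. Expanding $\norm{\,\cdot\,}_X^2 = \int_0^T \norm{\,\cdot\,}_{H^1}^2 \diff t$ and relabelling the index $j$ according to the three families turns the single sum over $j$ into the time integral of the three finite sums over $k$ displayed in the lemma; linearity lets the family-sums pass through the $t$-integral, and carrying the factor $\tfrac{1}{3K+2}$ along produces exactly the stated formula.

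The step that needs the most care is matching the projector in the abstract identity with the projection written inside the integral in the statement: the abstract coefficients $(y_j,\psi_k)_X$ carry an integration in $t$, whereas the lemma writes instantaneous coefficients $(v_{\ms}(t,z_k),\psi_k)_{H^1}$ under the integral sign. Reconciling these two readings, i.e.\ checking that the two projections deliver the same squared error once integrated over $(0,T]$, is the only genuine point of substance, and it is precisely the content of the snapshot/correlation-operator correspondence underlying Proposition~1 of \cite{kunisch2001galerkin}; I would lean on that result to legitimize the identification. Apart from this interpretation and the routine index bookkeeping, the proof is complete.
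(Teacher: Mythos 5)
Your overall route --- read the lemma as the abstract POD error identity applied to the ensemble of $3K+2$ snapshots built from $v_{\ms}$ --- is exactly what the paper intends; the paper supplies no argument beyond the citation of Proposition 1 in \cite{kunisch2001galerkin}, so in that sense you match it. However, the point you yourself single out as ``the only genuine point of substance'' is not closed by your argument, and it cannot be closed under the identification you set up. If you take $X=L^2(0,T;H^1(\Omega))$ as the Hilbert space of the finite-ensemble Proposition from Section \ref{proper_orthogonal_decomposition}, then the resulting POD modes $\psi_k$ are themselves elements of $X$, i.e.\ time-dependent, and the expansion coefficients $(y_j,\psi_k)_X$ are time-integrated scalars. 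The error identity you would obtain concerns a different projection from the one written in the lemma, which uses the instantaneous coefficients $(v_{\ms}(t,z_k),\psi_j)_{H^1}$ under the time integral; ``leaning on'' the citation does not reconcile the two, because for time-dependent modes these are genuinely different quantities.

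The correct instantiation is the time-continuous POD of \cite{kunisch2001galerkin}: the spatial Hilbert space is $H^1(\Omega)$, the snapshots are the $3K+2$ trajectories $y_j\in L^2(0,T;H^1(\Omega))$, and the relevant operator is the spatial correlation operator $\mathcal{K}\psi=\frac{1}{3K+2}\sum_{j}\int_0^T (y_j(t),\psi)_{H^1}\,y_j(t)\,dt$ acting on $H^1(\Omega)$, whose eigenfunctions are time-independent spatial modes. The matrix $K_{ij}=\frac{1}{3K+2}(y_j,y_i)_{H^1}$ (with the inner product read in the time-integrated sense) is the method-of-snapshots dual of $\mathcal{K}$ and shares its nonzero eigenvalues, so the $\psi_j$ produced from its eigenvectors are spatial functions; with spatial $\psi_j$ the instantaneous coefficients in the lemma are the right ones, and the stated equality is then precisely Proposition 1 of \cite{kunisch2001galerkin}. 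Your bookkeeping of the three snapshot families into a single index set of size $3K+2$, and the nondegeneracy remark, are fine and are all that remains once this identification is made.
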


Next, we define the Ritz-projection $P^\ell: V \rightarrow V_{\text{POD}}^\ell$ by: 
\begin{align}
    (P^\ell v, \psi)_{H^1} = (v, \psi)_{H^1} \quad \tforall  \psi\in V_{\text{POD}}^\ell,
    \label{eqn:pod-proj}
\end{align}
for any $v\in V$. We have the following estimate for the projection operator. 

\begin{lemma}
Let $\{z_k\}_{k = 1}^{K+1}\subset D$. 
For any $\ell \in\{1,2,...,\mathcal{N}\}$ and $v \in V$, the projection operator $P^\ell$ satisfies the following estimate: 
\begin{gather}
    \frac{1}{K+1} \sum_{k = 1}^{K+1} \int_0^T \norm{v_{\ms}(z_k)-P^\ell v_{\ms}(z_k)}_{H^1}^2 ~ dt \lesssim \sum_{k = \ell+1}^{\mathcal{N}} \lambda_k.
\end{gather}
\label{lemma1}
\end{lemma}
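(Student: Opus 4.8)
The plan is to derive the bound directly from the POD error formula of Lemma~\ref{pod} by exploiting the best-approximation (optimality) property of the Ritz projection $P^\ell$. The definition \eqref{eqn:pod-proj} says precisely that, at each fixed time $t$, the residual $v_{\ms}(z_k) - P^\ell v_{\ms}(z_k)$ is $(\cdot,\cdot)_{H^1}$-orthogonal to $V_{\text{POD}}^\ell$. Hence, by the Pythagorean identity for the orthogonal projection, $P^\ell v_{\ms}(z_k)$ is the closest point of $V_{\text{POD}}^\ell$ to $v_{\ms}(z_k)$ in the $H^1$-norm, so that
\begin{align*}
\norm{v_{\ms}(z_k) - P^\ell v_{\ms}(z_k)}_{H^1} \le \norm{v_{\ms}(z_k) - w}_{H^1} \quad \text{for every } w \in V_{\text{POD}}^\ell .
\end{align*}

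The natural competitor is the POD projection that already appears on the left-hand side of Lemma~\ref{pod}, namely $w = \sum_{j=1}^{\ell}\left(v_{\ms}(z_k),\psi_j\right)_{H^1}\psi_j \in V_{\text{POD}}^\ell$. Substituting this $w$ and squaring bounds the Ritz-projection error pointwise in $t$ by the POD-projection error. Integrating over $(0,T]$ and summing over $k = 1,\dots,K+1$ then produces on the right-hand side exactly the first of the three groups of terms inside the bracket of Lemma~\ref{pod}. Since the remaining two groups (those involving the discrete $z$-derivative of $\dot v_{\ms}$ and the second time derivative $\ddot v_{\ms}$) are non-negative, discarding them gives
\begin{align*}
\sum_{k=1}^{K+1} \int_0^T \norm{v_{\ms}(z_k) - P^\ell v_{\ms}(z_k)}_{H^1}^2 \diff t \;\le\; (3K+2)\sum_{k=\ell+1}^{\mathcal{N}} \lambda_k .
\end{align*}
Dividing by $K+1$ and observing that $(3K+2)/(K+1) \le 3$ is an absolute constant yields the asserted estimate, with the hidden constant independent of $K$, $\ell$, and the medium contrast.

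The only genuinely delicate point is the first step, where one must reconcile the space-time character of the POD basis $\{\psi_j\} \subset L^2(0,T;H^1(\Omega))$ with the spatial Ritz projection $P^\ell$ of \eqref{eqn:pod-proj}: I would verify that, for each $t$, the (time-)POD projection is an admissible competitor for the spatial minimization defining $P^\ell$, so that the best-approximation inequality can be invoked before the time integration. Equivalently, since $\{\psi_j\}$ is orthonormal in $X = L^2(0,T;H^1(\Omega))$, the operator $P^\ell$ coincides with the $X$-orthogonal projection onto $V_{\text{POD}}^\ell$, and $\int_0^T \norm{v_{\ms}(z_k) - P^\ell v_{\ms}(z_k)}_{H^1}^2 \diff t$ is exactly the squared $X$-projection error of the snapshot $v_{\ms}(z_k)$. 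Once this identification is in place, the argument is routine: it amounts only to dropping non-negative terms in the POD error formula and absorbing the bounded factor $(3K+2)/(K+1)$ into the constant.
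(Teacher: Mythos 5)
Your proof is correct and follows essentially the same route as the paper: the Galerkin orthogonality in \eqref{eqn:pod-proj} gives the best-approximation property of $P^\ell$, and choosing the POD truncation $\sum_{j=1}^{\ell}(v_{\ms}(z_k),\psi_j)_{H^1}\psi_j$ as the competitor lets you invoke Lemma \ref{pod}, discard the non-negative remaining terms, and absorb the factor $(3K+2)/(K+1)$ into the constant. Your added remark about reconciling the space-time POD inner product with the spatial definition of $P^\ell$ flags a genuine imprecision that the paper's own proof passes over silently, but it does not change the argument.
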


\begin{proof}
Let $v\in V$ be arbitrary. By the definition of the projection $P^\ell$, we have
\begin{eqnarray*}
\begin{split}
    \norm{v - P^\ell v}_{H^1}^2  = (v - P^\ell v, v - P^\ell v)_{H^1}
     = (v- P^l v, v-\psi)_{H^1} 
\leq \norm{v-P^\ell v}_{H^1} \norm{v-\psi}_{H^1}
\end{split}
\end{eqnarray*}
for all $\psi\in V_{\text{POD}}^\ell$. 
This implies that,
\begin{align}
\norm{v-P^\ell v}_{H^1}
    \lesssim \norm{v-\psi}_{H^1} \quad \tforall \psi \in V_{\text{POD}}^\ell.
    \label{eqn2}
\end{align} 
The lemma follows immediately by Lemma \ref{pod} and \eqref{eqn2}. 
\end{proof}

Clearly, from the proof of Lemma \ref{lemma1}, the corollary below follows immediately. 
\begin{corollary}
Let $v_{\ms} \in V_{\ms}$ be the semi-discretized solution which solves \eqref{eqn:quasi-dis-cem}. 

Then, the following estimates hold: 
\begin{align}
    \frac{1}{K}\sum_{k = 1}^K \int_0^T  \norm{\Tilde{\partial} \dot{v}_{\ms}(z_k)-P^\ell \Tilde{\partial} \dot{v}_{\ms}(z_k)}_{H^1}^2 ~ dt \lesssim \sum_{i = \ell+1}^{\mathcal{N}} \lambda_i
\label{pod_eqn}
\end{align}
and 
\begin{align}
    \frac{1}{K+1}\sum_{k = 0}^{K} \int_0^T \norm{\ddot{v}_{\ms}(z_k)-P^\ell \ddot{v}_{\ms}(z_k)}_{H^1}^2 dt \lesssim \sum_{i = \ell+1}^{\mathcal{N}} \lambda_i.
\end{align}
\label{pod_cor}
\end{corollary}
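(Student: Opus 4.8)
The plan is to run the same argument as in the proof of Lemma \ref{lemma1}, applied to the second and third families of snapshots instead of to $v_{\ms}$ itself. The only two ingredients are the best-approximation property of the Ritz projection $P^\ell$ recorded in \eqref{eqn2} and the POD error identity of Lemma \ref{pod}, both of which I may invoke directly.

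First I would isolate the form of \eqref{eqn2} that I actually need: since $P^\ell v$ is the $H^1$-orthogonal projection of $v$ onto $V_{\text{POD}}^\ell$, for every $v \in V$ and every competitor $\psi \in V_{\text{POD}}^\ell$ one has $\norm{v - P^\ell v}_{H^1} \leq \norm{v - \psi}_{H^1}$. I would then apply this pointwise in $t$, for each fixed node $z_k$, choosing as competitor precisely the POD truncation appearing inside Lemma \ref{pod}, which indeed lies in $V_{\text{POD}}^\ell$ as a linear combination of $\psi_1,\dots,\psi_\ell$. For the first estimate this means taking $v = \Tilde{\partial}\dot{v}_{\ms}(t,z_k)$ and $\psi = \sum_{j=1}^{\ell}(\Tilde{\partial}\dot{v}_{\ms}(t,z_k),\psi_j)_{H^1}\psi_j$, so that
\begin{equation*}
\norm{\Tilde{\partial}\dot{v}_{\ms}(z_k) - P^\ell \Tilde{\partial}\dot{v}_{\ms}(z_k)}_{H^1}^2 \leq \norm{\Tilde{\partial}\dot{v}_{\ms}(z_k) - \sum_{j=1}^{\ell}(\Tilde{\partial}\dot{v}_{\ms}(z_k),\psi_j)_{H^1}\psi_j}_{H^1}^2.
\end{equation*}

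Next I would integrate over $t \in (0,T)$ and sum over $k = 1,\dots,K$. The right-hand side is then exactly the middle bracketed term of Lemma \ref{pod}. Since the remaining two bracketed terms there are non-negative, discarding them turns the error identity into the one-sided bound
\begin{equation*}
\int_0^T \sum_{k=1}^{K} \norm{\Tilde{\partial}\dot{v}_{\ms}(z_k) - \sum_{j=1}^{\ell}(\Tilde{\partial}\dot{v}_{\ms}(z_k),\psi_j)_{H^1}\psi_j}_{H^1}^2 dt \leq (3K+2)\sum_{i=\ell+1}^{\mathcal{N}} \lambda_i.
\end{equation*}
Dividing by $K$ and using that $(3K+2)/K \leq 5$ for every $K \geq 1$, the prefactor is absorbed into the implicit constant, which gives \eqref{pod_eqn}. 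The second estimate follows verbatim with $v = \ddot{v}_{\ms}(t,z_k)$, the third bracketed term of Lemma \ref{pod} (a sum of $K+1$ terms), and division by $K+1$, noting that $(3K+2)/(K+1)$ is likewise uniformly bounded in $K$.

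There is essentially no hard step; the corollary is a bookkeeping variant of Lemma \ref{lemma1}. The only points meriting a moment's care are the passage from the \emph{equality} in Lemma \ref{pod} to the one-sided inequality by dropping the two non-negative companion terms, and the observation that the normalization prefactors $(3K+2)/K$ and $(3K+2)/(K+1)$ remain bounded uniformly in $K$ and therefore disappear into the $\lesssim$ notation.
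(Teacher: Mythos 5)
Your proposal is correct and is precisely the argument the paper has in mind: the paper dispenses with Corollary \ref{pod_cor} by saying it ``follows immediately from the proof of Lemma \ref{lemma1}'', i.e.\ by reusing the best-approximation property \eqref{eqn2} of $P^\ell$ with the POD truncation as competitor and then invoking the remaining bracketed terms of the error identity in Lemma \ref{pod}, exactly as you do. Your filling in of the details (dropping the non-negative companion terms and absorbing the normalization factors $(3K+2)/K$ and $(3K+2)/(K+1)$ into the implicit constant) is accurate and adds nothing beyond what the paper's one-line justification presumes.
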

Let $\{U_k\}_{k = 0}^K \subset V_{\text{POD}}^{\ell}$ be the semi-discretized solution satisfying the following equation 
\begin{align}
( \ddot{U}_k, \phi; z_k)_c + (\tilde \partial \dot{U}_k, \phi)+\frac{1}{2} a(U_k, \phi; z_k) = 0
\label{semi_dis_pod}
\end{align}
for all $\phi\in V_{\text{POD}}^\ell$ with appropriate initial condition $U_0$. Here, we denote $\tilde \partial \dot{U}_k := \frac{\dot{U}_k - \dot{U}_{k-1}}{\Delta z}$. 
We are now able to analyze the error. Assume that $\{ v_k^n \} \subset V_{\text{POD}}^\ell$ solves \eqref{eqn:fully-dis}, $\{U_k\}_{k=0}^K \subset V_{\text{POD}}^\ell$ solves \eqref{semi_dis_pod}, and the function $v_{\ms}\in V_{\ms}$ solves \eqref{eqn:quasi-dis-cem}. 
We decompose the error into three parts: 
$$ v_k^n - v_{\ms}(t_n) = \underbrace{v_k^n - U_k(t_n)}_{=: \mu_k^n} + \underbrace{U_k(t_n) - P^\ell v_{\ms}(t_n)}_{=: \nu_k^n} + \underbrace{P^\ell v_{\ms}(t_n) - v_{\ms}(t_n)}_{ =: \rho_k^n}.$$

We denote $\mu_k(t)$, $\nu_k(t)$, and $\rho_k(t)$ the piecewise linear functions that interpolates $\{ \mu_k^n \}$, $\{ \nu_k^n \}$, and $\{ \rho_k^n \}$ in time, respectively. Due to Lemma \ref{lemma1}, we have
\begin{align*}
    \frac{1}{K}\sum_{k = 1}^K \norm{\rho_k}_{L^2(0,T;H^1(\Omega))}^2 \lesssim \sum_{k = \ell+1}^{\mathcal{N}} \lambda_k.
\end{align*}
It should be noted that the term $\sum_{k = \ell+1}^
\mathcal{N} \lambda_k$ comes from the method of POD and is a typical error term in the POD analysis. The error decay to $0$ very fast; and by the theory, this error is optimal \cite{kunisch2001galerkin} since the method of POD solves a minimization problem \eqref{eqn:pod-exp}.

Next, using the notation $\Tilde{\partial}\nu_k = \frac{\nu_k-\nu_{k-1}}{\Delta z}$ for all $k = 1,..., K$ and the equation \eqref{eqn:quasi-dis-cem}, we obtain
\begin{align*}
    (\ddot{\nu}_k, \psi;z_k)_c+(\Tilde{\partial}\dot{\nu}_k, \psi )+
    \frac{1}{2} a(\nu_k, \psi; z_k) &=
    -(P^\ell \ddot{v}_{\ms}, \psi; z_k)_c-(\Tilde{\partial}P^\ell  \dot{v}_{\ms},\psi)
    -\frac{1}{2} a(v_{\ms}, \psi; z_k)\\
    & = (\ddot{v}_{\ms}-P^\ell \ddot{v}_{\ms}, \psi; z_k)_c + (\tilde \partial \dot{v}_{\ms}-\tilde{\partial}P^\ell \dot{v}_{\ms}, \psi)
\end{align*}
for any $\psi \in V_{\text{POD}}^\ell$. Let us denote
$$
h_k := \tilde \partial \dot{v}_{\ms}-\tilde{\partial}P^\ell \dot{v}_{\ms}
\quad 
\text{and} \quad 
w_k := \ddot{v}_{\ms}-P^\ell \ddot{v}_{\ms}.$$ 
By Corollary \ref{pod_cor}, it follows that
\begin{align*}
    \frac{1}{K}\sum_{k = 1}^K 
    \norm{h_k}_{L^2(0,T;H^1(\Omega))}^2 + \frac{1}{K+1} \sum_{k = 0}^K \norm{w_k}_{L^2(0,T;H^1(\Omega))}^2
    \lesssim \sum_{i = \ell+1}^{\mathcal{N}} \lambda_i.
\end{align*}
Using the same technique of showing Lemma \ref{semi_stability}, one can obtain an estimate for $\nu_k$: 
\begin{align*}
\|\dot{\nu}_k\|_{L^2(0,T;L^2(\Omega))}^2+\Delta z\sum_{k = 1}^K  \mathcal{E}_k(\nu_k(T))\lesssim
\Delta z\sum_{k = 1}^K
\norm{h_k}_{L^2(0,T;H^1(\Omega))}^2 + \norm{w_k}_{L^2(0,T;H^1(\Omega))}^2 \lesssim  \sum_{i=\ell+1}^{\mathcal{N}} \lambda_i.
\end{align*}
It remains to estimate the term $\mu_k$. This error comes from the temporal discretization and it follows that 
$$ \norm{\mu_k}_{L^2(0,T;H^1(\Omega))}^2 = \int_0^T \norm{\mu_k}_{H^1}^2 dt = \int_0^T O(\tau^2)dt = O(\tau).$$
Denote $v_k(t)$ the piecewise linear function that interpolates $\{ v_k^n \}$ in time. As a result, we have the following error estimate 
\begin{align}
\frac{1}{K}\sum_{k=1}^K \norm{v_k - v_{\ms}}_{L^2(0,T;H^1(\Omega))}^2 \lesssim O(\tau) + \sum_{i=\ell+1}^{\mathcal{N}} \lambda_i.
\end{align}

We remark that the error between the solution $\theta$ (that solves \eqref{eqn:quasi-dis}) and the solution $v_{\ms}$ has the relation: $\norm{\theta - v_{\ms}}_{L^2(0,T;H^1(\Omega))} = O(H)$. Therefore, the whole error estimate reads as follows: 
\begin{align}
\frac{1}{K} \sum_{k=1}^K \norm{\theta - v_k}_{L^2(0,T;H^1(\Omega))}^2 \lesssim O(H^2) + O(\tau) + \sum_{i=\ell+1}^{\mathcal{N}} \lambda_i.
\end{align}

\section{Numerical Experiments} \label{sec:numerics}
In this section, we present some numerical results to demonstrate the efficiency of the proposed computational multiscale method. 
In the experiments below, we set the spatial domain to be $\Omega = (0,1)^2$. 
We partition the spatial domain into $10 \times 10$ uniform square elements and we refer this partition to the coarse grid with mesh size $H = \sqrt{2}/10$. Further, we divide each coarse element into $10 \times 10$ uniform square elements and the corresponding fine grid has resolution of the size $100 \times 100$. We refer this to the fine grid of the spatial domain with mesh size $h  =\sqrt{2}/100$. 
We compute the numerical solutions $v_k^N$ at the terminal time for all $k = 1, \cdots, K$ and compare it with the reference solutions $v_f(z_k,x,T)$ for all $k$, which are computed by using the underlying fine grid. 
We measure the relative error in $L^2$ norm at each layer $z_k$, which is defined as follows:
$$
 e_2^k := \frac{\norm{v_k^N- v_f(z_k,x,T)}}{\norm{v_f(z_k,x,T)}}. 
$$

\subsection{The First Experiment}
In the first experiment, we set the initial condition to be $v(0,x,t) = \sin(\pi x_1)\sin(\pi x_2)\sin(t)$ where $x = (x_1, x_2)\in \Omega$; also we set homogeneous initial condition for all $z$, i.e., $v(z, x, 0) = 0$ for all $(z, x)\in D \times \Omega$. 
The temporal direction and $z$-direction are discretized as in the scheme \eqref{eqn:fully-dis}. We choose the time step to be $\Delta t =10^{-5}$ and the step size in $z$-direction is $\Delta z = 10^{-4}$. 
We set $K = 30$, i.e., the $z$ direction is partitioned into $30$ levels and $D = [0, 30 \Delta z]$.
 
In this experiment, the pattern of the permeability field $c(x)$ is given in Figure \ref{kappa} (Left). 
In $x_1$- and $x_2$-directions, the permeability will follow the same pattern. However, along the $z$-direction, the permeability field $c(x)$ has different values of contrast. See Figure \ref{kappa} (Right) for an illustration for this layer-structured heterogeneous media. In particular, we partition the permeability field along the $z$-direction into $3$ different layers; the value of contrast in the yellow region within each layer are equal to $10$, $15$, and $20$, respectively. We remark that the patterns keep the same as demonstrated in Figure \ref{kappa} (Left).

To obtain the set of POD basis functions, we first construct the CEM basis functions for each layer and then obtain the reduced basis functions by performing the POD procedure on the combined basis space. We choose 100 POD basis functions from the total $300$ CEM basis functions in this experiment. We remark that the number of the POD reduced basis functions depends on the decay of the eigenvalues $\{ \lambda_k \}$ corresponding to the POD construction. In practice, one has to include sufficiently many reduced basis functions so that the tail of the eigenvalues is smaller than a given tolerance of accuracy. 

\begin{figure}[ht]
\centering
\mbox{
\includegraphics[scale = 0.4]{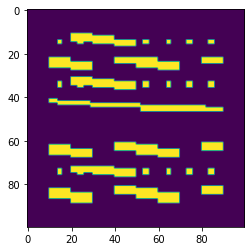} \qquad 
\includegraphics[scale = 0.2]{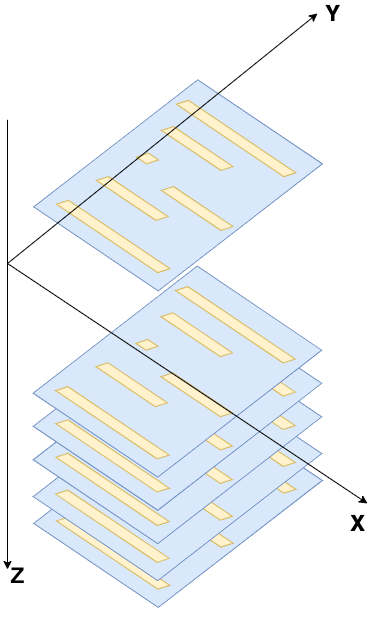}}
\caption{Left: The two-dimensional pattern of permeability field in Experiment $1$. 
Right: Permeability field at different layers $z_k$ has the same pattern but different values of contrast.}
\label{kappa}
\end{figure}

The record of relative error $e_2^k$ for all $k \in \{ 1, \cdots, K\}$ at the terminal time is plotted in Figure \ref{loss1} (Left). 
The relative error at each layer is around the level of magnitude of $10^{-3}$. 
One can observe from the graph that the proposed numerical scheme is capable for accurately approximating the fine-scale solution. 

\begin{figure}[ht]
\centering
\mbox{
\includegraphics[scale = 0.15]{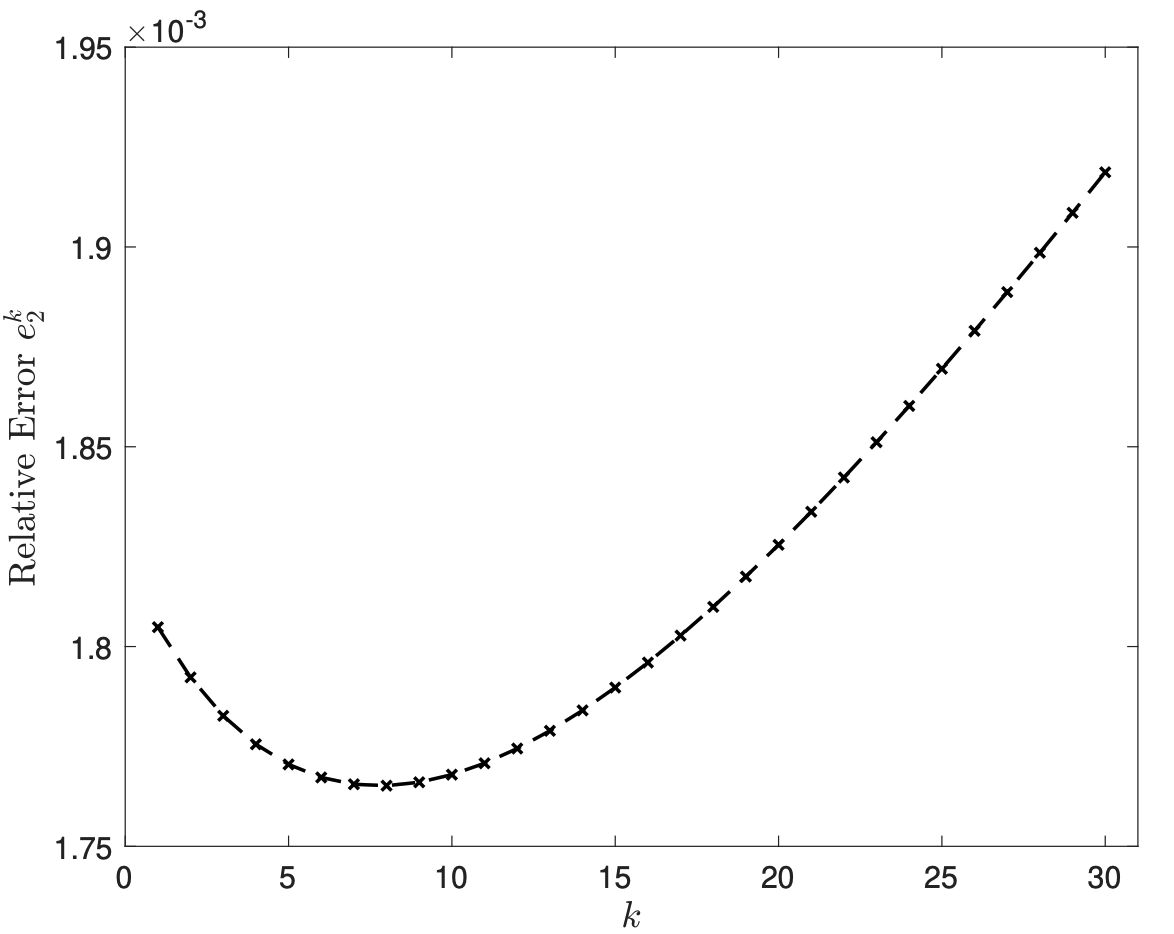} \qquad 
\includegraphics[scale = 0.15]{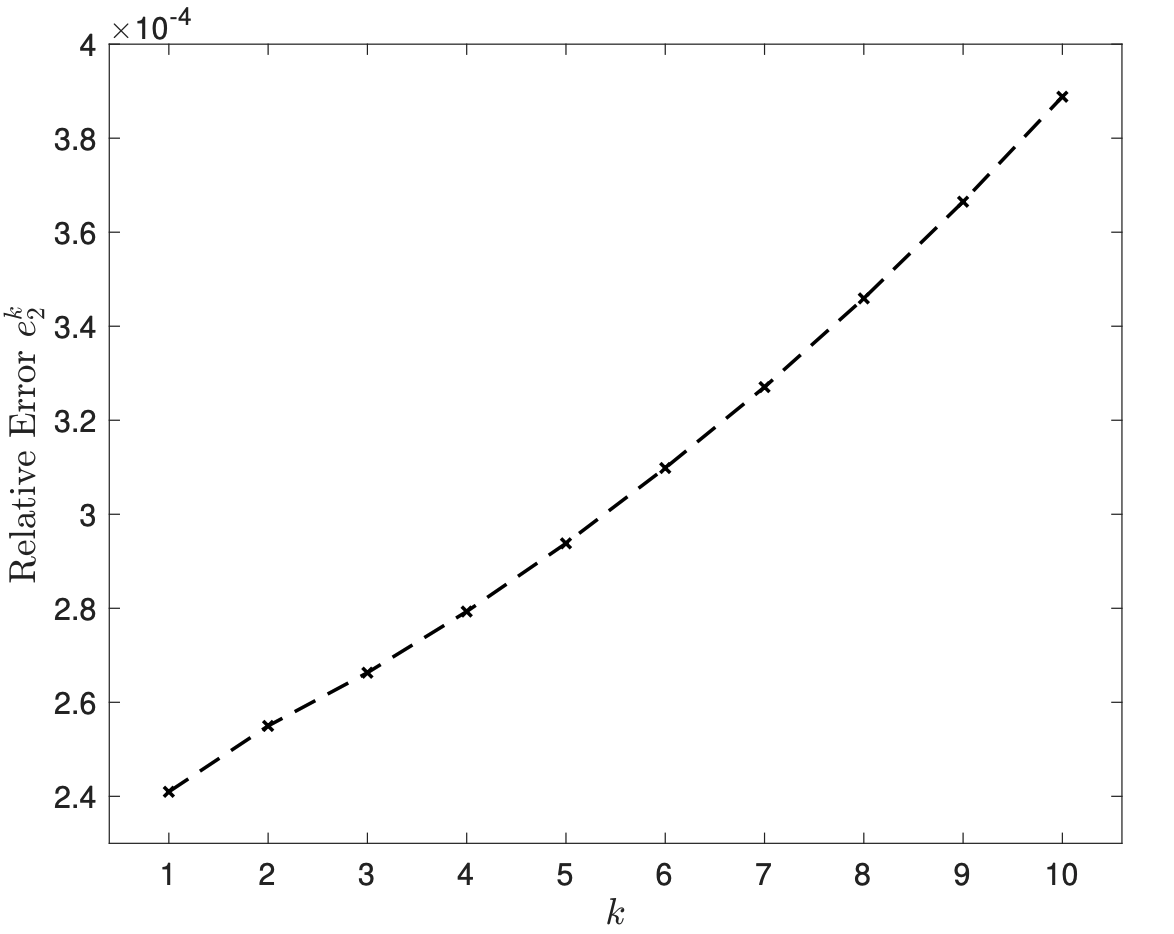}}
\caption{The relative error $e_2^k$. Left: Experiment 1; Right: Experiment 2.}
\label{loss1}
\end{figure}

\subsection{Experiment 2}
In the second experiment, we consider the so-called Marmousi heterogeneous field.
More precisely, given $z_k\in D$, $c(z_k)$ is a Marmousi permeability field as shown in Figure \ref{Marmousi}.
\begin{figure}[ht]
\centering
\includegraphics[scale = 0.4]{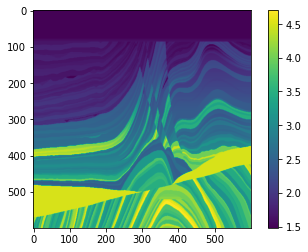}
\caption{Marmousi permeability field in Experiment $2$.}
\label{Marmousi}
\end{figure}

The initial and boundary data are the same as in the first experiment. 
We set $T = 5 \times 10^{-3} $ and $\Delta t = 5\times 10^{-6}$. We set $\Delta z = 10^{-4}$ and $D = [0, 10 \Delta z]$ with $K = 10$. 
For each $z_k \in D$, the medium $c(z_k)$ is obtained by taking the data from a very fine Marmousi field of size $600\times 600$.
To be more specific, we partition the fine field (which is of size $600\times 600$) into a coarse field of size $100\times 100$ whose coarse element size is $6\times 6$. Then, in each coarse element, we randomly pick a value in one of the fine elements in this coarse element; and we formulate our computational fine grid ($100\times 100$) as combining all selected elements accordingly.

To get the set of POD basis functions, we first calculate three sets of CEM basis functions  with $k \in \{ 0, 4, 8\}$ and obtain the POD basis by performing the POD procedure on the combined set of CEM basis functions. We choose $50$ POD basis functions from the total $300$ CEM basis functions. The record of relative error for all $z_k$ at the terminal time is plotted in Figure \ref{loss1} (Right). 
The relative error at each layer is around the level of magnitude of $10^{-4}$ in this case. In particular, the relative error $e_2^k$ is about $3.9 \times 10^{-4}$ for $k = 10$. This demonstrates the efficiency of the proposed multiscale method for the simulation of parabolic wave formulation. 

\section{Conclusion} \label{sec:conclusion}
In this paper, we developed a computational multiscale method for simulating the parabolic wave formulation with highly heterogeneous media. For the spatial discretization, we employed the recently developed CEM-GMsFEM, which is proved to be efficient to reduce the dimension of the model in space. We then combined the technique of proper orthogonal decomposition to further reduce the dimension along the quasi time direction. A complete analysis of the proposed algorithm has been provided. Numerical results are provided to demonstrate the effectiveness and efficiency of the proposed multiscale method. 

\section*{Acknowledgement}

\bibliographystyle{abbrv}
\bibliography{references}

\begin{thebibliography}{10}

\bibitem{abe07}
A.~Abdulle and B.~Engquist.
\newblock Finite element heterogeneous multiscale methods with near optimal
  computational complexity.
\newblock {\em Multiscale Modeling \& Simulation}, 6(4):1059--1084, 2007.

\bibitem{bamberger1988higher}
A.~Bamberger, B.~Engquist, L.~Halpern, and P.~Joly.
\newblock Higher order paraxial wave equation approximations in heterogeneous
  media.
\newblock {\em SIAM Journal on Applied Mathematics}, 48(1):129--154, 1988.

\bibitem{bamberger1988parabolic}
A.~Bamberger, B.~Engquist, L.~Halpern, and P.~Joly.
\newblock Parabolic wave equation approximations in heterogenous media.
\newblock {\em SIAM Journal on Applied Mathematics}, 48(1):99--128, 1988.

\bibitem{bamberger1984paraxial}
A.~Bamberger, B.~Euggnist, L.~Helpern, and P.~Joly.
\newblock The paraxial approximation for the wave equation: some new results.
\newblock {\em Adv. Comput. Methods Partial Differential Equations}, pages
  340--344, 1984.

\bibitem{blomgren2002super}
P.~Blomgren, G.~Papanicolaou, and H.~Zhao.
\newblock Super-resolution in time-reversal acoustics.
\newblock {\em The Journal of the Acoustical Society of America},
  111(1):230--248, 2002.

\bibitem{brock1977modifying}
H.~Brock, R.~Buchal, and C.~Spofford.
\newblock Modifying the sound-speed profile to improve the accuracy of the
  parabolic-equation technique.
\newblock {\em The Journal of the Acoustical Society of America},
  62(3):543--552, 1977.

\bibitem{brown2013efficient}
D.~L. Brown, Y.~Efendiev, and V.~H. Hoang.
\newblock An efficient hierarchical multiscale finite element method for stokes
  equations in slowly varying media.
\newblock {\em Multiscale Modeling \& Simulation}, 11(1):30--58, 2013.

\bibitem{cances2015embedded}
E.~Cances, V.~Ehrlacher, F.~Legoll, and B.~Stamm.
\newblock An embedded corrector problem to approximate the homogenized
  coefficients of an elliptic equation.
\newblock {\em Comptes Rendus Mathematique}, 353(9):801--806, 2015.

\bibitem{chen2019homogenize}
J.~Chen, S.~Sun, and Z.~He.
\newblock Homogenize coupled stokes--cahn--hilliard system to darcy's law for
  two-phase fluid flow in porous medium by volume averaging.
\newblock {\em Journal of Porous Media}, 22(1), 2019.

\bibitem{chen2019homogenization}
J.~Chen, S.~Sun, and X.~Wang.
\newblock Homogenization of two-phase fluid flow in porous media via volume
  averaging.
\newblock {\em Journal of Computational and Applied Mathematics}, 353:265--282,
  2019.

\bibitem{chetverushkin2020computational}
B.~Chetverushkin, E.~Chung, Y.~Efendiev, S.-M. Pun, and Z.~Zhang.
\newblock Computational multiscale methods for quasi-gas dynamic equations.
\newblock {\em arXiv preprint arXiv:2009.00068}, 2020.

\bibitem{chetverushkin2019compact}
B.~Chetverushkin, A.~Saveliev, and V.~Saveliev.
\newblock Compact quasi-gasdynamic system for high-performance computations.
\newblock {\em Computational Mathematics and Mathematical Physics},
  59(3):493--500, 2019.

\bibitem{chetverushkin2018kinetic}
B.~N. Chetverushkin, N.~D’Ascenzo, A.~V. Saveliev, and V.~Saveliev.
\newblock Kinetic model and magnetogasdynamics equations.
\newblock {\em Computational Mathematics and Mathematical Physics},
  58(5):691--699, 2018.

\bibitem{chung2016adaptiveJCP}
E.~Chung, Y.~Efendiev, and T.~Y. Hou.
\newblock Adaptive multiscale model reduction with generalized multiscale
  finite element methods.
\newblock {\em Journal of Computational Physics}, 320:69--95, 2016.

\bibitem{MixedGMsFEM}
E.~Chung, Y.~Efendiev, and C.~S. Lee.
\newblock Mixed generalized multiscale finite element methods and applications.
\newblock {\em Multiscale Modeling \& Simulation}, 13:338--366, 2014.

\bibitem{chung2018constraint}
E.~Chung, Y.~Efendiev, and W.~T. Leung.
\newblock Constraint energy minimizing generalized multiscale finite element
  method.
\newblock {\em Computer Methods in Applied Mechanics and Engineering},
  339:298--319, 2018.

\bibitem{chung2018constraintmixed}
E.~Chung, Y.~Efendiev, and W.~T. Leung.
\newblock Constraint energy minimizing generalized multiscale finite element
  method in the mixed formulation.
\newblock {\em Computational Geosciences}, 22(3):677--693, 2018.

\bibitem{WaveGMsFEM}
E.~T. Chung, Y.~Efendiev, and W.~T. Leung.
\newblock Generalized multiscale finite element methods for wave propagation in
  heterogeneous media.
\newblock {\em Multiscale Modeling \& Simulation}, 12(4):1691--1721, 2014.

\bibitem{chung2018fast}
E.~T. Chung, Y.~Efendiev, and W.~T. Leung.
\newblock Fast online generalized multiscale finite element method using
  constraint energy minimization.
\newblock {\em Journal of Computational Physics}, 355:450--463, 2018.

\bibitem{NLMC}
E.~T. Chung, Y.~Efendiev, W.~T. Leung, M.~Vasilyeva, and Y.~Wang.
\newblock Non-local multi-continua upscaling for flows in heterogeneous
  fractured media.
\newblock {\em Journal of Computational Physics}, 372:22--34, 2018.

\bibitem{chung2015goal}
E.~T. Chung, W.~T. Leung, and S.~Pollock.
\newblock Goal-oriented adaptivity for {GM}s{FEM}.
\newblock {\em Journal of Computational and Applied Mathematics}, pages
  625--637, 2015.

\bibitem{claerbout1976fundamentals}
J.~F. Claerbout.
\newblock {\em Fundamentals of geophysical data processing}, volume 274.
\newblock Citeseer, 1976.

\bibitem{cole1975modern}
J.~D. Cole.
\newblock Modern developments in transonic flow.
\newblock {\em SIAM Journal on Applied Mathematics}, 29(4):763--787, 1975.

\bibitem{ee03}
W.~E and B.~Engquist.
\newblock The {H}eterogeneous {M}ultiscale {M}ethods.
\newblock {\em Communications in Mathematical Sciences}, 1(1):87--132, 2003.

\bibitem{GMsFEM13}
Y.~Efendiev, J.~Galvis, and T.~Y. Hou.
\newblock Generalized multiscale finite element methods ({GM}s{FEM}).
\newblock {\em Journal of Computational Physics}, 251:116--135, 2013.

\bibitem{fafalis2018computational}
D.~Fafalis and J.~Fish.
\newblock Computational continua for linear elastic heterogeneous solids on
  unstructured finite element meshes.
\newblock {\em International Journal for Numerical Methods in Engineering},
  115(4):501--530, 2018.

\bibitem{fish2010computational}
J.~Fish and S.~Kuznetsov.
\newblock Computational continua.
\newblock {\em International Journal for Numerical Methods in Engineering},
  84(7):774--802, 2010.

\bibitem{fish2005multiscale}
J.~Fish and Z.~Yuan.
\newblock Multiscale enrichment based on partition of unity.
\newblock {\em International Journal for Numerical Methods in Engineering},
  62(10):1341--1359, 2005.

\bibitem{fu2019edge}
S.~Fu, E.~Chung, and G.~Li.
\newblock Edge multiscale methods for elliptic problems with heterogeneous
  coefficients.
\newblock {\em Journal of Computational Physics}, 2019.

\bibitem{gao2015generalized}
K.~Gao, S.~Fu, R.~L. Gibson~Jr, E.~T. Chung, and Y.~Efendiev.
\newblock Generalized multiscale finite-element method ({GM}s{FEM}) for elastic
  wave propagation in heterogeneous, anisotropic media.
\newblock {\em Journal of Computational Physics}, 295:161--188, 2015.

\bibitem{hkj12}
H.~Hajibeygi, D.~Kavounis, and P.~Jenny.
\newblock A hierarchical fracture model for the iterative multiscale finite
  volume method.
\newblock {\em Journal of Computational Physics}, 230(4):8729--8743, 2011.

\bibitem{hasegawa1973transmission}
A.~Hasegawa and F.~Tappert.
\newblock Transmission of stationary nonlinear optical pulses in dispersive
  dielectric fibers. i. anomalous dispersion.
\newblock {\em Applied Physics Letters}, 23(3):142--144, 1973.

\bibitem{henning2012localized}
P.~Henning, A.~M{\aa}lqvist, and D.~Peterseim.
\newblock A localized orthogonal decomposition method for semi-linear elliptic
  problems.
\newblock {\em ESAIM: Mathematical Modelling and Numerical Analysis},
  48(5):1331--1349, 2014.

\bibitem{hs05}
V.~Hoang and C.~Schwab.
\newblock High dimensional finite elements for elliptic problems with multiple
  scales.
\newblock {\em Multiscale Modeling \& Simulation}, 3:168--194, 2004.

\bibitem{hw97}
T.~Y. Hou and X.-H. Wu.
\newblock A multiscale finite element method for elliptic problems in composite
  materials and porous media.
\newblock {\em Journal of Computational Physics}, 134:169--189, 1997.

\bibitem{hudson1980parabolic}
J.~Hudson.
\newblock A parabolic approximation for elastic waves.
\newblock {\em Wave Motion}, 2(3):207--214, 1980.

\bibitem{jennylt03}
P.~Jenny, S.~Lee, and H.~Tchelepi.
\newblock Multi-scale finite volume method for elliptic problems in subsurface
  flow simulation.
\newblock {\em Journal of Computational Physics}, 187:47--67, 2003.

\bibitem{jennylt05}
P.~Jenny, S.~Lee, and H.~Tchelepi.
\newblock Adaptive multi-scale finite volume method for multi-phase flow and
  transport in porous media.
\newblock {\em Multiscale Modeling \& Simulation}, 3:30--64, 2004.

\bibitem{kunisch2001galerkin}
K.~Kunisch and S.~Volkwein.
\newblock Galerkin proper orthogonal decomposition methods for parabolic
  problems.
\newblock {\em Numerische mathematik}, 90(1):117--148, 2001.

\bibitem{le2014msfem}
C.~Le~Bris, F.~Legoll, and A.~Lozinski.
\newblock An {M}s{FEM} type approach for perforated domains.
\newblock {\em Multiscale Modeling \& Simulation}, 12(3):1046--1077, 2014.

\bibitem{le2014multiscale}
C.~Le~Bris, F.~Legoll, and F.~Thomines.
\newblock Multiscale finite element approach for weakly random problems and
  related issues.
\newblock {\em ESAIM: Mathematical Modelling and Numerical Analysis},
  48(3):815--858, 2014.

\bibitem{maalqvist2014localization}
A.~M{\aa}lqvist and D.~Peterseim.
\newblock Localization of elliptic multiscale problems.
\newblock {\em Mathematics of Computation}, 83(290):2583--2603, 2014.

\bibitem{mcdaniel1975parabolic}
S.~T. McDaniel.
\newblock Parabolic approximations for underwater sound propagation.
\newblock {\em The Journal of the Acoustical Society of America},
  58(6):1178--1185, 1975.

\bibitem{mcdaniel1975propagation}
S.~T. McDaniel.
\newblock Propagation of normal mode in the parabolic approximation.
\newblock {\em The Journal of the Acoustical Society of America},
  57(2):307--311, 1975.

\bibitem{oz06_1}
H.~Owhadi and L.~Zhang.
\newblock Metric-based upscaling.
\newblock {\em Communications on Pure and Applied Mathematics}, 60:675--723,
  2007.

\bibitem{rk07}
A.~Roberts and I.~Kevrekidis.
\newblock General tooth boundary conditions for equation free modeling.
\newblock {\em SIAM Journal on Scientific Computing}, 29(4):1495--1510, 2007.

\bibitem{salama2017flow}
A.~Salama, S.~Sun, M.~F. El~Amin, Y.~Wang, and K.~Kumar.
\newblock Flow and {T}ransport in {P}orous {M}edia: A {M}ultiscale {F}ocus.
\newblock {\em Geofluids}, 2017, 2017.

\bibitem{skr06}
G.~Samaey, I.~Kevrekidis, and D.~Roose.
\newblock Patch dynamics with buffers for homogenization problems.
\newblock {\em Journal of Computational Physics}, 213(1):264--287, 2006.

\bibitem{srk05}
G.~Samaey, D.~Roose, and I.~Kevrekidis.
\newblock The gap-tooth scheme for homogenization problems.
\newblock {\em Multiscale Modeling \& Simulation}, 4(1):278--306, 2005.

\bibitem{tan2019high}
W.~C. Tan and V.~H. Hoang.
\newblock High dimensional finite element method for multiscale nonlinear
  monotone parabolic equations.
\newblock {\em Journal of Computational and Applied Mathematics}, 345:471--500,
  2019.

\end{thebibliography}
\end{document}